\newtheorem{theorem}{Theorem}
\newtheorem{lemma}{Lemma}
\newtheorem{remark}{Remark}
\newtheorem{assumption}{Assumption}
\newtheorem{example}{Example}
\providecommand{\E}[1]{{\ensuremath{\mathbb{E}}\mspace{-2mu}\left[#1\right]}}    
\DeclarePairedDelimiter\abs{\lvert}{\rvert}%
\DeclarePairedDelimiter\norm{\lVert}{\rVert}%
\DeclareMathOperator*{\argmin}{arg\,min}
\begin{document}

\title{Nonasymptotic Convergence Rate of Quasi-Monte Carlo: Applications to Linear Elliptic PDEs with Lognormal Coefficients and Importance Samplings}

\author[CEMSE]{Yang~Liu\corref{cor1}\fnref{fn1}\,\orcidlink{0000-0003-0778-3872}}
\ead{yang.liu.3@kaust.edu.sa}

\author[CEMSE]{Ra\'{u}l~Tempone\fnref{fn1,fn2}\,\orcidlink{0000-0003-1967-4446}}
\address[CEMSE]{Computer, Electrical and Mathematical Sciences and Engineering,\\
  4700 King Abdullah University of Science and Technology (KAUST),\\
  Thuwal 23955-6900, Kingdom of Saudi Arabia}

\cortext[cor1]{Corresponding author}
\fntext[fn1]{KAUST SRI Center for Uncertainty Quantification in Computational Science and Engineering}
\fntext[fn2]{Alexander von Humboldt Professor in Mathematics for Uncertainty
  Quantification, RWTH Aachen University, 52062 Aachen, Germany}

\markboth{Liu et al.}{Non-asymptotic convergence rate of the quasi-Monte Carlo method}

\begin{abstract}
  {This} study analyzes the {nonasymptotic} convergence behavior of {the} quasi-Monte Carlo (QMC) {method} with applications to linear elliptic {partial differential equations} (PDEs) with lognormal coefficients. {Building upon the error analysis presented in}~{(Owen, 2006)}, we derive a nonasymptotic convergence estimate {depending} on the specific integrands, {the input dimensionality}, and {the finite number of samples used in the QMC quadrature}. {We discuss the effects of the variance and dimensionality of the input random variable}. {Then, we} apply the {QMC method} with importance sampling (IS) {to approximate} deterministic, real-valued, bounded linear functionals that depend on the solution of a linear elliptic PDE with a lognormal diffusivity coefficient in bounded domains of $\mathbb{R}^d$, where the random coefficient is modeled as {a} stationary Gaussian random field parameterized by the trigonometric and wavelet-type basis. We propose two types of IS distributions, analyze their effects on the QMC convergence rate, and {observe the improvements}. 
\end{abstract}

\begin{keyword}
  Quasi-Monte Carlo \sep
  Importance sampling \sep
  Partial differential equations with random data \sep
  Wavelets \sep 
  Finite elements
  \MSC[2020] 65C05 \sep 35R60 \sep 65N22
\end{keyword}
\maketitle

\section{Introduction}
\label{sec:intro}
The quasi-Monte Carlo (QMC) method {computes} the expectation of a random variable using deterministic low-discrepancy sequences. For regular integrands, {the} QMC {method} offers a convergence rate of $\mathcal{O}(n^{-1 + \epsilon})$ for $\epsilon>0$, {surpassing} the Monte Carlo convergence rate of $\mathcal{O}(n^{-1/2})$. However, the effectiveness of {the} QMC {method} depends on the regularity of the integrand and the integration dimension. The integrand variation and integration dimension dictate the classical upper bound for {the} QMC {method}, known as the Koksma{–}Hlawka inequality~\cite{niederreiter1992random}.

Concerns about the quality of point sets arise as the integration dimension increases~\cite{morokoff1994quasi, wang2008low, kuo2016practical, gerber2015sequential}. For instance, the Halton sequence may exhibit pathological behavior in certain dimensions{; thus,} researchers have proposed remedies~\cite{dong2022dependence, owen2017randomized}.

Despite these challenges, the QMC method has been successful in relatively high dimensions due to the ``low effective dimension''~\cite{wang2003effective,wang2005high}. This notion first arose from the analysis of variance (ANOVA) decomposition in~\cite{caflisch1997valuation}. Several studies have aimed to apply various approaches to minimize the effective dimension, particularly in financial applications~\cite{Bayer2016SmoothingTP, Bayer2020NumericalSA, wang2009dimension}. {Nevertheless}, an equivalence theorem in~\cite{wang2011quasi}, an extension of the worst-case integration error analysis in~\cite{papageorgiou2002brownian}, stated that no decomposition method (includes Brownian {b}ridge, {p}rincipal {c}omponent {a}nalysis, etc.) is consistently superior to other methods in terms of {different} payoff functions. Nonetheless, considering the {exact forms of the payoff function}, the {authors of}~\cite{imai2006general} introduced the linear transformation method, exploit{ing} the nonuniqueness of the covariance matrix decomposition {to find the most important dimensions}. The design of the transformation matrix's columns optimizes the variance contribution of each random variable. 

In addition to the effective dimension, the regularity of the integrand also affects the QMC integration. In finance applications, option pricing problems can sometimes involve discontinuous functions. The orthogonal transformation was proposed in~\cite{wang2013pricing} to transform the discontinuity, involv{ing} a linear combination of the random variables, to become parallel to the axes. The {authors of}~\cite{imai2014pricing} discussed the connection between orthogonal and linear transformation methods. The authors in~\cite{he2015convergence} characterized the ``QMC-friendly'' axis-parallel discontinuity and explicitly derived the convergence rate for integrands with such discontinuities, supported by numerical experiments. {Although the optimal QMC convergence rates are not recovered, the superiorities to the MC methods are shown.}

Conditioning is a well-known approach to reduce variance, but it can also improve the smoothness~\cite{griewank2018high,weng2017efficient,Bayer2016SmoothingTP,bayer2023numerical}. {In}~\cite{Griebel2013TheSE}, {the authors demonstrated that}, under certain conditions, all terms from the ANOVA decomposition except the one {with} the highest order have infinite smoothness. {In addition, in}~\cite{gilbert2022preintegration}, the conditions under which preintegration works {were provided}. In~\cite{Liu2022PreintegrationVA}, the authors proposed preintegrat{ing} in a subspace comprising a linear combination of random input variables. {Moreover, previous studies}~\cite{Jin2006ReclaimingQ,Bayer2022OptimalDW} exploited the regularity with Fourier transformation when the original function exhibits non-smoothness. 

{A partial differential equation} (PDE) with random coefficients is another application {in which the} QMC method accelerates the convergence of computing the statistical properties of a quantity of interest (QoI), often taking the form of a functional of the PDE solution. In~\cite{Sloan1998WhenAQ}, the authors proved {that} the QMC worst-case error is independent of the dimension if the integrand belongs to a class of weighted Sobolev spaces. { Another study}~\cite{nichols2014fast} considered the weights of the form ``product and order dependent'' to minimize the worst-case error and provide a fast method to design the lattices according to the weights. The dimension-independent worst-case QMC errors of elliptic PDEs with affine uniform and lognormal coefficients {were} analyzed in~\cite{Kuo2012QuasiMonteCF} and~\cite{graham2015quasi}. In the latter case, the space is equipped with additional weight functions. {Moreover, in}~\cite{herrmann2019qmc,kazashi2019quasi}{, the authors} considered the ``product weights'' in the weighted space and lattice rule design. More recent studies~\cite{goda2022construction} and~\cite{pan2023super} have considered the median-of-the-mean QMC estimator using lattice rule (without specific design) and digital sequence, respectively. 

Despite the careful analysis of the asymptotic convergence rates in the literature, the ideal QMC convergence rate $\mathcal{O}(n^{-1})$ is not always observed in practice. In this work, we aim to study the nonasymptotic convergence behavior of the QMC method and explain the often observed suboptimal convergence rates. 

{Specifically}, certain integration problems involve integrand domains different from $[0, 1]$, such as the integration with respect to {(w.r.t)} the Gaussian measure on $\mathbb{R}$. The inverse cumulative distribution function (CDF) transformation is applied for compatibility with QMC methods. However, this transformation often introduces singularities at the boundaries. {The work~\cite{Owen06} characterized the boundary singularity with the boundary growth rate and made connections with the asymptotic QMC convergence rate. Particularly, the author provided examples of integrands which blow up at boundaries but still lead to optimal QMC convergence rate.} Building on this work, we {aim} to analyze the nonasymptotic QMC convergence rate for some examples involving lognormal random variables to explain the observed suboptimal rates. {Last, the importance sampling (IS) is a well-known method for variance reduction.} We also aim to discover the extent to which IS with certain proposal distributions improves the convergence rate. 

Some recent publications have delved into the realm of QMC methods for potentially unbounded integrands. One study~\cite{gobet2022mean} combined the robust mean estimator with QMC sampling. In parallel, the work~\cite{he2022error} and~\cite{ouyang2023quasi} studied the QMC method with IS. While these studies provided valuable insights into the asymptotic convergence rate, they did not address the nonasymptotic behaviors frequently observed in real-world scenarios. In contrast, {our work contributes to developing} a {convergence rate} model for QMC methods with a finite number of samples. This innovative approach offers potential pathways to enhance the practically-observed convergence rate.

The paper is organized as follows. Section~\ref{sec:non_asymptotic_convergence_rate_QMC} discusses the nonasymptotic convergence rate. Next, Section~\ref{sec:integrand_infty} presents the convergence rate analysis for two examples: the expectation of a lognormal random variable and elliptic PDEs with lognormal coefficients. {Then,} Section~\ref{sec:importance_sampling} evaluates the effects of two kinds of IS distributions. Section~\ref{sec:numerical} details the numerical results. Finally, Section~\ref{sec:concl} presents the conclusions. 
\section{Nonasymptotic convergence rate for the randomized QMC method}
\label{sec:non_asymptotic_convergence_rate_QMC}
This section follows the proofs in~\cite{Owen06} and accordingly modifies them to establish the nonasymptotic results. First, we introduce the notation. We are interested in the following integration problem:
\begin{align}
	I(g) = \int_{[0, 1]^s} g(\mathbf{t}) d\mathbf{t},
	\label{eq:integration_problem}
\end{align}
where $g: [0, 1]^s \to \mathbb{R}$. We let $\mathcal{P} = \{\mathbf{t}_1, \mathbf{t}_2, \dotsc, \mathbf{t}_{n}\}$ be a point set in $[0, 1]^s$. 

The QMC estimator for the integrand $g$ is given by
\begin{align}
	\hat{I}_n(g)= \frac{1}{n} \sum_{i=1}^n g(\mathbf{t}_i),
	\label{eq:qmc_estimation}
\end{align}
where the $\{\mathbf{t}_i\}_{i=1}^n$ is a predesigned deterministic low-discrepancy sequence~\cite{niederreiter1992random,dick2010digital}. A notion to describe how well the points in $\mathcal{P}$ are uniformly distributed is the star discrepancy $\Delta_{\mathcal{P}}^*$, given by
\begin{align}
\Delta_{\mathcal{P}}^* := \sup_{\mathbf{x} \in [0, 1]^s} \left\lvert \Delta_n (\mathbf{x};\mathcal{P}) \right\rvert,
\end{align}
where $\Delta_n (\mathbf{x};\mathcal{P}) = \frac{1}{n} \sum_{j=1}^n \mathbbm{1}_{\mathbf{t}_j \in [0, \mathbf{x})} - \prod_{k=1}^s x_k$, which is the difference of the measure of $[0, \mathbf{x})$ and the proportion of points that belong to this set, where $[0, \mathbf{x})$ denotes the tensor product of each dimension, i.e. $[0, \mathbf{x}) = \prod_{k=1}^{s} [0, x_k)$. We expect a small difference for an evenly distributed point set. For some low-discrepancy sequences with fixed length $n$, we have
\begin{align}
\Delta_{\mathcal{P}}^* = \mathcal{O}(n^{-1} (\log n)^s), 
\label{eq:star_discrepancy_rate}
\end{align}
where the exponent of the log term becomes $s-1$ for an infinite sequence~\cite{niederreiter1992random,dick2010digital}. 

In this work we consider functions with a continuous mixed first-order derivative. The variation in the Hardy–Krause sense can be computed as follows:
\begin{align}
V_{HK}(g) = \sum_{\emptyset \neq \mathfrak{u} \subseteq 1:s}  \int_{[0, 1]^{\abs{\mathfrak{u}}}} \left\lvert {\partial^{\mathfrak{u}}g(\mathbf{t^{\mathfrak{u}}} \colon \bm{1^{-\mathfrak{u}}} ) } \right\rvert d \mathbf{t^{\mathfrak{u}}},
\end{align}
where $\abs{\mathfrak{u}}$ is the cardinality of the set $\mathfrak{u}$, $\mathbf{y} = {\mathbf{t^{\mathfrak{u}}} \colon \mathbf{1^{-\mathfrak{u}}} } \in [0, 1]^s$ denotes a point in $[0,1]^s$ with ${y^j} = {t^j}$ for $j \in \mathfrak{u}$, and $y^j = 1$ otherwise. For the set $\mathfrak{u} = \{\mathfrak{u}_1, \dotsc, \mathfrak{u}_{\abs{\mathfrak{u}}}\} \subseteq \{1, \dotsc, s\}$, the mixed derivative $\partial^{\mathfrak{u}} g$ is explicitly given by
\begin{equation*}
	\partial^{\mathfrak{u}} g(\mathbf{t}) = \frac{\partial^{\abs{\mathfrak{u}}} g}{\partial \mathbf{{t}}_{\mathfrak{u}}} (\mathbf{{t}}) = \frac{\partial}{\partial t_{\mathfrak{u}_{\abs{\mathfrak{u}}}}} \cdots \frac{\partial}{\partial t_{\mathfrak{u}_1}} g(\mathbf{{t}})
\end{equation*}
where the continuity ensures that the order of differentiation can be switched while maintaining derivative invariance.

The Koksma–Hlawka inequality provides an error estimate for the QMC method, given by 
\begin{align}
\left\lvert I(g) - \hat{I}_{n}(g) \right\rvert \leq V_{HK}(g) \Delta_{\mathcal{P}}^*,
\label{eq:koksma_hlawka_inequality}
\end{align}
where $V_{HK}(g)$ is the variation of $g$ in the Hardy–Krause sense.

However, the use of a deterministic point set yields biased results. Randomization techniques have been introduced to address this {problem}, giving rise to the randomized QMC (RQMC) unbiased estimator~\cite{lecuyer2018randomized}:
\begin{align}
\hat{I}^{r}_{n} (g) = \frac{1}{n} \sum_{i=1}^n g(\mathbf{t}_i \oplus \boldsymbol{\Delta}) \coloneqq \frac{1}{n} \sum_{i=1}^n g(\tilde{\mathbf{t}}_i ),
\end{align}
where $\tilde{\mathbf{t}}_i \coloneqq \mathbf{t}_i \oplus \boldsymbol{\Delta}$ denotes the $i$th randomized QMC quadrature, $\mathbf{t}_i$ is the $i$th deterministic QMC quadrature point, $\boldsymbol{\Delta}$ represents a randomization and typically can be modelled as a random variable from uniform distribution ${U}[0, 1]^s$, and $\oplus$ denotes the randomization operation. An example of such an operation is the random shift, where $\boldsymbol{\Delta}$ is a random variable from uniform distribution ${U}[0, 1]^s$ and $\mathbf{a} \oplus \mathbf{b} = (\mathbf{a} + \mathbf{b}) \textrm{ mod } 1$, with the modulo taken component-wise. This random shift approach is easy to implement and provides an unbiased integral estimator. 

In this work, we consider specifically the problem of evaluating the integral
\begin{align}
	\begin{split}
	I(g) &= \int_{[0, 1]^s} g(\mathbf{t}) d\mathbf{t}\\
	&= \int_{\mathbb{R}^s} g\circ \Phi (\mathbf{y}) \rho (\mathbf{y}) d\bm{y},
	\end{split}
	\label{eq:integrand_g}
\end{align}
where $\circ$ denotes the function composition, $\mathbf{y} = \Phi^{-1} (\mathbf{t})$, $\Phi^{-1} : [0,1]^s \to \mathbb{R}^s$ the inverse CDF of $s$-dimensional {standard} normal distribution, and $\rho$ {represents} its probability density function. Section~\ref{sec:integrand_infty} {presents} two concrete examples of $g$. Before we analyze such an integrand, we study the behavior of uniform random variables. 

\subsection{Some properties of the uniform distribution}

Lemma~\ref{lemma:bound for d-dim uniform RV} introduces a useful property of the uniform distribution. 
\begin{lemma}[A bound for {an} $s$-dimensional uniform random variable]
	Let each $\mathbf{{t}}_i$ in the sequence $\{\mathbf{{t}}_i\}_{i = 1}^{\infty}$ be uniformly distributed over $[0, 1]^s$.  Define $E_n$ as the event $\{\prod_{j=1}^s t_n^j \leq C \cdot n^{-r}\}$ for $r > 1$ and $C>0$. Then, we have
	\begin{align}
	\textnormal{Pr}\left( E_n \enspace i.o.\right) = 0,
	\end{align}
	where $i.o.$ stands for ``infinitely often.''
	\label{lemma:bound for d-dim uniform RV}
\end{lemma}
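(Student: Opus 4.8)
The plan is to apply the first Borel--Cantelli lemma, which requires only $\sum_{n=1}^{\infty} \mathrm{Pr}(E_n) < \infty$ and imposes no independence assumption across the $\mathbf{t}_i$ (the hypothesis only gives that each $\mathbf{t}_i$ is marginally uniform on $[0,1]^s$, which is all we need). Thus the entire task reduces to a sufficiently sharp upper bound on $\mathrm{Pr}(E_n) = \mathrm{Pr}\!\big(\prod_{j=1}^s t_n^j \le C n^{-r}\big)$. Since $\mathbf{t}_n$ is uniform on the cube, its coordinates $t_n^1,\dots,t_n^s$ are i.i.d.\ $U[0,1]$, hence $-\log t_n^j$ are i.i.d.\ $\mathrm{Exp}(1)$ and $-\log\prod_{j=1}^s t_n^j$ has a Gamma$(s,1)$ distribution.

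The key step is the resulting tail estimate. Writing $\epsilon_n = C n^{-r}$ and $x_n = \log(1/\epsilon_n) = r\log n - \log C$, the event $E_n$ is exactly $\{-\log\prod_{j=1}^s t_n^j \ge x_n\}$, so $\mathrm{Pr}(E_n)$ is the upper tail of a Gamma$(s,1)$ law at $x_n$; for integer shape this is the closed form $\mathrm{Pr}(E_n) = \epsilon_n \sum_{k=0}^{s-1} x_n^k / k!$. (If one prefers to avoid the Gamma representation, the same formula follows by a short induction on $s$ from $F_s(\epsilon) = \int_0^1 F_{s-1}(\epsilon/u)\,du$ with the convention $F_{s-1}(v)\equiv 1$ for $v\ge 1$, where $F_s(\epsilon) = \mathrm{Pr}(\prod_{j=1}^s U_j\le\epsilon)$.) Substituting $x_n$, the $k=s-1$ term dominates and yields $\mathrm{Pr}(E_n) = \mathcal{O}\!\big(n^{-r}(\log n)^{s-1}\big)$ as $n\to\infty$, with constants depending only on $C$, $r$, and $s$.

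Finally, since $r>1$, the series $\sum_{n\ge 2} n^{-r}(\log n)^{s-1}$ converges, e.g.\ by the integral test, or by absorbing the logarithmic factor into a power and comparing with $\sum_n n^{-(1+r)/2}$. Therefore $\sum_n \mathrm{Pr}(E_n) < \infty$, and the first Borel--Cantelli lemma gives $\mathrm{Pr}(E_n\ i.o.) = 0$.

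I do not anticipate a real obstacle here: the only place requiring care is the product-of-uniforms tail bound, where one must either invoke the Gamma$(s,1)$ distribution of $-\sum_j \log t_n^j$ or run the brief induction above, and one should note explicitly that Borel--Cantelli in this direction needs no mutual independence of the $\mathbf{t}_i$, only their identical uniform law.
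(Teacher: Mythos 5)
Your proposal is correct and follows essentially the same route as the paper: both reduce the event to an upper-tail probability of a Gamma$(s,1)$ (equivalently $\chi^2_{(2s)}$) variable, bound it by $\mathcal{O}\bigl(n^{-r}(\log n)^{s-1}\bigr)$, and conclude by the first Borel--Cantelli lemma, which indeed needs no independence across the $\mathbf{t}_i$. The only cosmetic difference is that you use the exact Erlang tail formula $e^{-x_n}\sum_{k=0}^{s-1} x_n^k/k!$, while the paper bounds the incomplete gamma function via an inequality of Pinelis; both give the same summable estimate.
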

The sequence $\{\mathbf{t}_i\}$ in Lemma~\ref{lemma:bound for d-dim uniform RV} is not necessarily independent, as is the case for the RQMC method, where the points are desired to exhibit a negative correlation~\cite{Wiart2019OnTD}. Lemma~\ref{lemma:bound for d-dim uniform RV} is a slight modification of Lemma 4.1 in~\cite{Owen06}, where the minimum condition is removed. The proof follows~\cite{Owen06}, except we corrected an upper bound. The proof is provided in the supplementary material~\ref{sec:proof_of_lemma_bound_for_d-dim_uniform_RV}. 

\begin{remark}[Results in earlier literature]
	Lemma 4.1 in~\cite{Owen06} states a stronger result, namely that
	\begin{align}
	\textnormal{Pr}\left( \min_{1\leq i \leq n} \prod_{j=1}^s t_i^j \leq C \cdot n^{-r} \enspace i.o.\right) = 0.
	\label{eq:owen_statement}
	\end{align}
	However, the additional condition 
	\begin{align*}
		\min_{1\leq i \leq n}
	\end{align*}
{leads} to a conclusion that is different from the statement~\eqref{eq:owen_statement}. Specifically, if we assume $\{\mathbf{t}_i\}_{i=1}^n$ are independently and identically distributed (i.i.d.) $U[0, 1]^s$ and {define} the event $F_n = \{ \min_{1\leq i \leq n} \prod_{j=1}^s t_i^j \leq  n^{-r}\}$. We choose $C=1$ for simplicity. Then,
	 \begin{align*}
	 \textnormal{Pr}\left( F_n \enspace i.o. \right) &= \textnormal{Pr}\left( \bigcap_{n=1}^{\infty} \bigcup_{k = n}^{\infty} F_k \right) \nonumber \\
	 &= \lim_{n \to \infty} \textnormal{Pr}\left( \bigcup_{k = n}^{\infty} F_k \right)\\
	 &\geq \lim_{n \to \infty} \textnormal{Pr}\left( F_n \right)\\
	 &= \lim_{n \to \infty} \left( 1 - \prod_{i=1}^n \textnormal{Pr}\left(  \prod_{j=1}^s t_i^j > n^{-r}  \right) \right).
	 \end{align*}
	 For example, when $s = 1$,
	 \begin{align*}
	 	\lim_{n \to \infty} \left( 1 - \prod_{i=1}^n \textnormal{Pr}\left( t_i > n^{-r}  \right) \right) &= \lim_{n \to \infty} \left( 1 - \left( 1 - n^{-r}  \right)^n \right)\\
	 	&= 1 - \exp\left(-\frac{1}{r}\right).
	 \end{align*}
\end{remark}

\begin{remark}[The nonunique constant $C$]
The constant $C$ in Lemma~\ref{lemma:bound for d-dim uniform RV} is not determined uniquely. Nevertheless, we retain this constant $C$ in the estimation model. 
\end{remark}

The event $E_n$, as defined in the above proof, only occurs finitely many times \textit{almost surely}. The origin is bounded away in this sense. {Through} symmetry arguments, we can {similarly show} that all the $2^s$ corners are bounded away from the following hyperbolic set{:}
\begin{align}
K_{n, s} = \left\{\mathbf{t}\in[0,1]^s\mid \prod_{1\leq j \leq s} \min(t_j, 1-t_j) \geq C n^{-1} \right\},
\label{eq:hyperbolic_set}
\end{align}
where we use the boundary case $n^{-1}$ instead of $n^{-r}$ from Lemma~\ref{lemma:bound for d-dim uniform RV}. We also set $Cn^{-1}\leq 1$ to exclude trivial cases. 
\begin{figure}[htbp]
	\centering
	\includegraphics[width=0.48\textwidth]{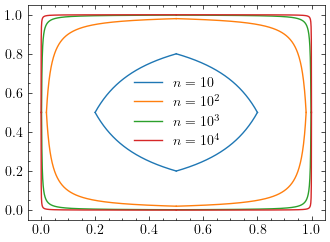}
	\hfill
	\includegraphics[width=0.48\textwidth]{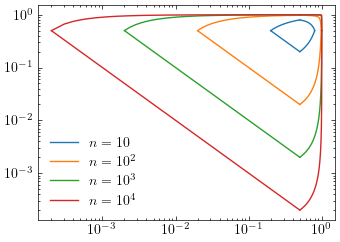}
	\caption{Illustration of the hyperbolic set $K_{n,s}$ introduced in~\eqref{eq:hyperbolic_set} using {the} linear scale (left) and log scale (right) for various $n$, where $C=1$. }
	\label{fig:K_ns}
\end{figure}
From Lemma~\ref{lemma:bound for d-dim uniform RV}, we see that the uniform distribution samples ``diffuse'' to the corner at a certain rate. Figure~\ref{fig:uniform_samples} plots the $n$ samples from {the} uniform distribution $U[0, 1]^2$ and the corresponding reference boundaries $t_1t_2 = n^{-1}$. The samples approach the corner when size increases and a small portion of samples lie outside their corresponding reference boundaries.
\begin{figure}[htbp]
	\centering
	\includegraphics[width = 0.75\textwidth]{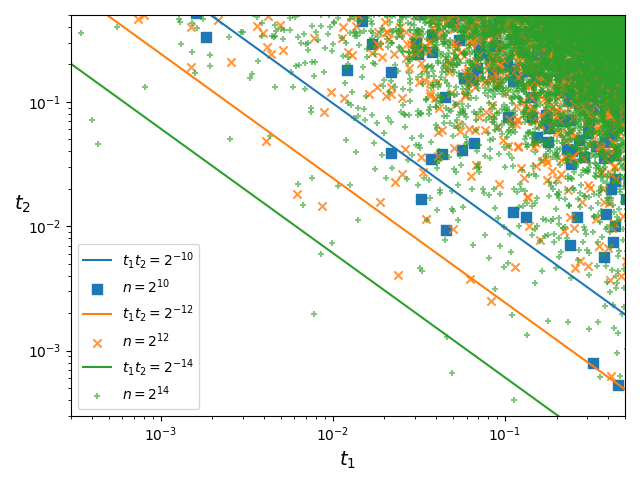}
	\caption{Samples from {a two-dimensional} uniform distribution $U[0,1]^2$ with various sample sizes $n$, and the corresponding reference boundaries $t_1 t_2 =n^{-1}$.}
	\label{fig:uniform_samples}
\end{figure}
\subsection{Integrand with infinite variation}

We are often interested in integrands with infinite variation, for which the Koksma–Hlawka inequality~\eqref{eq:koksma_hlawka_inequality} is not informative. However, cases remain where the QMC methods work. The study~\cite{he2015convergence} considered discontinuities inside the integration domain and proved that the convergence rate is still superior to the Monte Carlo method as long as some axes are parallel to the discontinuity interface. Another study~\cite{Owen06} explored integrands that blow up at the boundary of $[0, 1]^s$ and derived convergence rates {based on a} boundary growth condition. We are interested in the latter case. Following the work~\cite{Owen06}, the set $K_{n,s}$ was introduced to split the integration domain {and the Sobol' low-variation extension~\cite{Sobol1973calculation} $\tilde{g}$ extends $g$ from $K_{n, s}$ to $[0, 1]^s$.} The extension $\tilde{g}$ depends on $n$ and $s$ through the hyperbolic set $K_{n,s}$, but we suppress these dependences for the sake of simpler notations. Though the original name in the literatrue states ``low-variation'', we assure the readers that the extension $\tilde{g}$ indeed has finite variation. For completeness, we provide the details here.

To this end, {some} notations need to be introduced. We denote $1:s \coloneqq \{1, 2, \dotsc, s\}$. For an index set $\mathfrak{u} \subseteq 1:s$, $-\mathfrak{u}$ denotes its complement $1:s \setminus \mathfrak{u}$. The notation $\mathbf{z^{\mathfrak{u}}}:\mathbf{c^{-\mathfrak{u}}}$ is used to denote the point $\mathbf{y} \in [0, 1]^s$, where $y^j = z^j$ for $j \in \mathfrak{u}$ and $y^j = c^j$ for $j \notin \mathfrak{u}$, ``concatenating'' the vectors $\mathbf{z^{\mathfrak{u}}}$ and $\mathbf{c^{\mathfrak{-u}}}$. For simplicity, we assume the derivative of the integrand $\partial^{\mathfrak{u}}g$ exists in $(0, 1)^s$, for $\mathfrak{u} \subseteq 1:s$. Given an anchor point $\mathbf{c} \in K_{n,s}$ {and} using the fundamental theorem of calculus, we obtain
\begin{align}
	g(\mathbf{t}) = g(\mathbf{c}) + \sum_{\mathfrak{u} \neq \emptyset} \int_{[\mathbf{c^{\mathfrak{u}}}, \mathbf{t^{\mathfrak{u}}}]} \partial^{\mathfrak{u}} g(\mathbf{z^{\mathfrak{u}}}:\mathbf{c^{-\mathfrak{u}}})d\bm{z^{\mathfrak{u}}}. 
\end{align}
The Sobol' low-variation extension $\tilde{g}:\mathbb{R}^s \to \mathbb{R}$ is given by
\begin{align}
	\label{eq:sobol_low_variation_extension}
	\tilde{g}(\mathbf{t}) = g(\mathbf{c}) + \sum_{\mathfrak{u} \neq \emptyset} \int_{[\mathbf{c^{\mathfrak{u}}}, \mathbf{t^{\mathfrak{u}}} ] } \mathbbm{1}_{\{ \mathbf{z^{\mathfrak{u}}}:\mathbf{c^{-\mathfrak{u}}} \in K_{n, s}\}} ~ \partial^{\mathfrak{u}} g(\mathbf{z^{\mathfrak{u}}}:\mathbf{c^{-\mathfrak{u}}})d\bm{z^{\mathfrak{u}}}. 
\end{align}

Following~\cite{Owen06, Sobol1973calculation}, {we apply} a three-epsilon argument to {bound} the integration error. Recalling the notations on the exact integration~\eqref{eq:integration_problem} and QMC estimation~\eqref{eq:qmc_estimation}, we bound:
\begin{align}
\begin{split}
\lvert I(g) - \hat{I}^r_n(g) \rvert &\leq \lvert I(g) - I(\tilde{g}) \rvert + \lvert I(\tilde{g}) - \hat{I}^r_n(\tilde{g}) \rvert + \lvert \hat{I}^r_n(\tilde{g}) - \hat{I}^{r}_n(g) \rvert\\
& \leq \int_{[0, 1]^s - K_{n, s}} \lvert g(\mathbf{t}) - \tilde{g}(\mathbf{t}) \rvert d\mathbf{t} + \Delta^*(\tilde{\mathbf{t}}_1, \dotsc, \tilde{\mathbf{t}}_n)V_{HK}(\tilde{g}) + \frac{1}{n}\sum_{i=1}^n \lvert \tilde{g}(\tilde{\mathbf{t}}_i) - g(\tilde{\mathbf{t}}_i) \rvert.
\end{split}
\end{align}
{Observe} that $g$ and $\tilde{g}$ coincide in $K_{n, s}$. Moreover, we have
\begin{align}
\mathbb{E} \left[\frac{1}{n}\sum_{i=1}^n \left\lvert \tilde{g}(\tilde{\mathbf{t}}_i) - g(\tilde{\mathbf{t}}_i) \right\rvert \right] &=  \frac{1}{n}\sum_{i=1}^n \mathbb{E} \left[ \left\lvert \tilde{g}(\tilde{\mathbf{t}}_i) - g(\tilde{\mathbf{t}}_i) \right\rvert \right] \nonumber \\
&= \int_{[0, 1]^s - K_{n, s}} \lvert g(\mathbf{t}) - \tilde{g}(\mathbf{t}) \rvert d\mathbf{t}.
\end{align}
Using the above inequalities, we obtain the following finite upper bound for the RQMC method:
\begin{align}
\begin{split}
\mathbb{E} [ \lvert I(g) - \hat{I}^{r}_n(g) \rvert ] & \leq 2\int_{[0, 1]^s - K_{n, s}} \lvert g(\mathbf{t}) - \tilde{g}(\mathbf{t}) \rvert d\mathbf{t} + \mathbb{E} \left[ \Delta^*(\tilde{\mathbf{t}}_1, \dotsc, \tilde{\mathbf{t}}_n) \right]V_{HK}(\tilde{g}).
\end{split}
\label{eq:expected_rqmc_upper_bound}
\end{align}

Notice that the above bound~\eqref{eq:expected_rqmc_upper_bound} depends on the value of $C$, which is used to construct the set $K_{n,s}$ in Equation~\eqref{eq:hyperbolic_set}. When $C = 0$, this becomes the classical Koksma–Hlawka inequality, and when $C = \frac{n}{2^s}$, the hyperbolic set $K_{n,s}$ vanishes and the right hand side becomes $2 \int_{[0, 1]^s } \lvert g (\mathbf{t}) \rvert d \mathbf{t}$. For $0< C < \frac{n}{2^s}$, the {asymptotic} upper bound also reduces to the Koksma–Hlawka inequality, as $n \to \infty$. 

To examine the behavior of the difference $g - \tilde{g}$ and the variation $V_{HK}(\tilde{g})$ as the set $K_{n,s}$ grows when $n$ increases, we assume the following boundary growth condition (in the nonasymptotic case) outside the set $ K_{n, s}$.
\begin{assumption}[Nonasymptotic boundary growth condition]
	For the integrand $g$, we assume the following boundary growth condition. For each fixed $n \in \mathbb{N}$, $\forall \mathfrak{u} \subseteq 1:s$, there exist $0 < A_j < \frac{1}{2}, \forall j \in 1:s$:
	\begin{align}
	\lvert \partial^{\mathfrak{u}} g(\mathbf{t}) \rvert \leq B(\mathbf{t}_c) \prod_{j=1}^s \min{(t_j, 1 - t_j)}^{-A_j(t_c^j) - \mathbbm{1}_{j \in \mathfrak{u}}} \quad \mathbf{t} \in [0, 1]^s, \quad \mathbf{t}_c \in \partial K_{n,s},
	\label{eq:boundary_growth_condition}
	\end{align}
	where we assume $B(\mathbf{t}_c) < +\infty$. 
	\label{assump:boundary_growth_condition}
\end{assumption}

In the rest of this paper, we will only consider one orthant of the unit cube, i.e.,  $\mathbf{t} \in [0, \frac{1}{2}]^s$ and $\mathbf{t}_c \in \partial K_{n,s} \cap [0, \frac{1}{2}]^s$, as other cases can be similarly derived by symmetry. Throughout this work, the inequalities for vectors are assumed to hold componentwise. The dependence $A_j = A_j(\mathbf{t}_c)$ is crucial in the following nonasymptotic analysis. For now, we consider the example of an anisotropic integrand. 
\begin{example}[An anisotropic integrand]
	\begin{equation}
		g(\mathbf{t}) = \exp(2\Phi^{-1}(1 - t_1) + \Phi^{-1}(1 - t_2)) \quad \mathbf{t} \in [0, 1]^2. 
			\label{eq:anisotropic_integrand}
	\end{equation}
\end{example}
As is often the case, the function~\eqref{eq:anisotropic_integrand} in our example is not isotropic w.r.t. the coordinates. 
The value of $\lvert \partial^{1:s} g(\mathbf{t}) \rvert$ may not be constant for $\mathbf{t} \in \partial K_{n,s}$. 
%
By Assumption~\ref{assump:boundary_growth_condition}, we have
\begin{equation}
	\label{eq:upper_bound_bgc}
	\begin{split}
		\abs*{\partial^{1:s} g(\mathbf{t})} &\leq B(\mathbf{t}_c) \prod_{j=1}^s \min({t_j}, 1 - t_j)^{-A_j(t_c^j) - 1}.
	\end{split}
\end{equation}
In the following we aim to find {an} upper bound of the following term:
\begin{align*}
	B(\mathbf{t}_c) \prod_{j=1}^s \min({t_j}, 1 - t_j)^{-A_j(t_c^j)},  ~ \textrm{where}~ \mathbf{t}_c \in \partial K_{n,s}
\end{align*}
in dimension $s > 1$. Without loss of generality, we assume the maximizer is found when $\mathbf{t}_c \in [0, \frac{1}{2}]^s$ due to the symmetry of the domain. This leads to the following optimization problem:
\begin{align}
	\begin{split}
		\max_{\mathbf{t}_c \in \partial K_{n,s}}&~ \log B(\mathbf{t}_c) +  \sum_{j=1}^s -A_j \log t_c^j\\
		s.t.&~ \sum_{j=1}^s \log t_c^j = \log \delta (n),
	\end{split}
	\label{eq:optimization_aj}
\end{align}
for a given $\delta(n) = Cn^{-1} > 0$. 
We define $\mathbf{t}_c^*$ as the optimizer of~\eqref{eq:optimization_aj} and
\begin{equation}
	A_j^* \coloneqq A_j(\mathbf{t}_c^*), B^{*}_{n, s} \coloneqq B(\mathbf{t}_c^*)
	\label{eq:a_star}
\end{equation}
for $j = 1, \dotsc, s$. In one-dimensional case, the point $\mathbf{t}_c^*$ is automatically given by $Cn^{-1}$. 
Next we derive the bound for the difference between the integrand and its Sobol' low-variation extension.

\begin{lemma}[A bound for the difference between the integrand and its Sobol' low-variation extension]
Let $\mathbf{t} \in [0, 1]^s$. For integrand $g$ satisfying the boundary growth condition~\eqref{eq:boundary_growth_condition}, we have
	\begin{align}
		\lvert g(\mathbf{t}) - \tilde{g}(\mathbf{t}) \rvert \leq \tilde{B}_{n, s}
		 \prod_{j=1}^s \min{(t_j, 1 - t_j)}^{-A_j^*},
	\end{align}
	where $A_j^*$ is determined by the optimization problem~\eqref{eq:optimization_aj}, and $\tilde{B}_{n, s}  = 2^s {B^{*}_{n, s} }\prod_{j=1}^s (1 + \frac{1}{A_j^{*}})$.
	\label{lemma:difference_integrand_sobol_low_variation_extension}
\end{lemma}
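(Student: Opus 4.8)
The plan is to bound $|g(\mathbf{t}) - \tilde g(\mathbf{t})|$ directly from the integral representation. Subtracting the definition of the Sobol' low-variation extension~\eqref{eq:sobol_low_variation_extension} from the fundamental theorem of calculus identity for $g(\mathbf{t})$, the $g(\mathbf{c})$ terms cancel and we are left with
\begin{align*}
	g(\mathbf{t}) - \tilde g(\mathbf{t}) = \sum_{\mathfrak{u} \neq \emptyset} \int_{[\mathbf{c^{\mathfrak{u}}}, \mathbf{t^{\mathfrak{u}}}]} \bigl(1 - \mathbbm{1}_{\{\mathbf{z^{\mathfrak{u}}}:\mathbf{c^{-\mathfrak{u}}} \in K_{n,s}\}}\bigr)\, \partial^{\mathfrak{u}} g(\mathbf{z^{\mathfrak{u}}}:\mathbf{c^{-\mathfrak{u}}})\, d\bm{z^{\mathfrak{u}}}.
\end{align*}
So the first step is to take absolute values, push them inside the sum and integrals, and bound each $|\partial^{\mathfrak{u}} g|$ using Assumption~\ref{assump:boundary_growth_condition} — with the crucial observation that on the integration path the coordinates outside $\mathfrak{u}$ are frozen at $c^j$, so those factors are $\min(c^j,1-c^j)^{-A_j(t_c^j)}$, a constant along the path, while the coordinates in $\mathfrak{u}$ carry the extra $-1$ in the exponent and get integrated.

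Next I would carry out the one-dimensional integration in each coordinate $j \in \mathfrak{u}$. Working in the orthant $[0,\tfrac12]^s$, a factor $\int_{c^j}^{t^j} z^{-A_j - 1}\, dz \leq \frac{1}{A_j}\bigl((c^j)^{-A_j} + (t^j)^{-A_j}\bigr)$, or more simply one bounds it by $\tfrac{1}{A_j}\bigl(t^j)^{-A_j}$ up to the anchor contribution; combining the "anchored" factor $(c^j)^{-A_j}$ for $j \notin \mathfrak{u}$ with this gives, after summing over all $\mathfrak{u}$, a product of the form $\prod_j \bigl(1 + \tfrac{1}{A_j}\bigr)$ times $\prod_j \min(t_j,1-t_j)^{-A_j}$ times $B(\mathbf{t}_c)$ evaluated at the relevant anchor on $\partial K_{n,s}$ — this is where the factor $\prod_{j=1}^s(1 + \tfrac{1}{A_j^*})$ and the $2^s$ (from the $2^s$ orthants / the two endpoints of each integral) emerge.

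Finally, to replace the anchor-dependent quantities $A_j(t_c^j)$ and $B(\mathbf{t}_c)$ by the specific $A_j^*$, $B^*_{n,s}$, I would invoke the optimization problem~\eqref{eq:optimization_aj}: the anchor $\mathbf{c}$ lies on $\partial K_{n,s}$, where $\sum_j \log c^j = \log \delta(n)$, so $B(\mathbf{c}) \prod_j \min(c^j,1-c^j)^{-A_j(c^j)}$ is dominated by the maximum of exactly the objective in~\eqref{eq:optimization_aj}, attained at $\mathbf{t}_c^*$; substituting the definitions~\eqref{eq:a_star} yields the constant $\tilde B_{n,s} = 2^s B^*_{n,s}\prod_{j=1}^s(1 + \tfrac{1}{A_j^*})$. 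I expect the main obstacle to be the bookkeeping when a single coordinate exponent must be the \emph{same} $A_j^*$ for all subsets $\mathfrak{u}$ simultaneously: one has to argue that bounding each term's anchored factors by the common worst-case value at $\mathbf{t}_c^*$ is legitimate, i.e. that the constraint set $\partial K_{n,s}$ and the monotone structure of the bound let the per-$\mathfrak{u}$ maxima all be absorbed into the single maximizer of~\eqref{eq:optimization_aj}; the rest is routine one-dimensional estimation and geometric-series-type summation over the $2^s$ subsets.
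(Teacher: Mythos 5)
Your overall skeleton --- subtract \eqref{eq:sobol_low_variation_extension} from the fundamental-theorem identity, bound $\partial^{\mathfrak{u}}g$ via the boundary growth condition, integrate each coordinate in $\mathfrak{u}$, and sum over subsets through $\sum_{\mathfrak{u}\neq\emptyset}\prod_{j\in\mathfrak{u}}\frac{t_j^{-A_j^*}}{A_j^*}=\prod_{j=1}^s\bigl(1+\frac{t_j^{-A_j^*}}{A_j^*}\bigr)-1$ --- is exactly the paper's route. The genuine gap is in how you produce the constant $\tilde B_{n,s}$. In the paper the anchor is the center $\mathbf{c}=(\tfrac12,\dots,\tfrac12)$, an \emph{interior} point of $K_{n,s}$, so for $j\notin\mathfrak{u}$ the frozen factors are $(\tfrac12)^{-A_j^*}\le 2$, and that is the sole source of the $2^s$; the orthant symmetry only replaces $t_j$ by $\min(t_j,1-t_j)$ and costs nothing. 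You instead place the anchor on $\partial K_{n,s}$ (so that $\sum_j\log c^j=\log\delta(n)$) and propose to dominate $B(\mathbf{c})\prod_j(c^j)^{-A_j(c^j)}$ by the maximum of the objective in \eqref{eq:optimization_aj}. But that maximum equals $B^{*}_{n,s}\prod_j(t_c^{*j})^{-A_j^*}$, not $B^{*}_{n,s}$: by the constraint $\prod_j t_c^{*j}=Cn^{-1}$ the extra factor is of order $(C^{-1}n)$ raised to a weighted average of the $A_j^*$ (exactly $(C^{-1}n)^{A^*}$ in the lognormal example, where all $A_j^*$ coincide). This factor is absent from $\tilde B_{n,s}=2^sB^{*}_{n,s}\prod_{j=1}^s(1+1/A_j^*)$, and carrying it through would degrade the first term of Theorem~\ref{thm:nonasymptotic_QMC_error_estimate}; it is precisely the $n^{\max_j A_j^*}$-type factor that belongs only in the bound for $V_{HK}(\tilde g)$, not here. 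Your attribution of the $2^s$ to ``orthants / endpoints'' is likewise not how the constant arises.

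The repair is simply to anchor at the center: the optimization problem \eqref{eq:optimization_aj} is invoked once, \emph{before} any integration, to replace the family of bounds in Assumption~\ref{assump:boundary_growth_condition} (indexed by $\mathbf{t}_c\in\partial K_{n,s}$) by the single worst-case pair $(B^{*}_{n,s},A_j^*)$ of \eqref{eq:a_star}, so that $\lvert\partial^{\mathfrak{u}}g(\mathbf{t})\rvert\le B^{*}_{n,s}\prod_{j=1}^s t_j^{-A_j^*-\mathbbm{1}_{j\in\mathfrak{u}}}$ holds on $[0,\tfrac12]^s$ uniformly in $\mathfrak{u}$; this also settles the uniformity concern you raise at the end, since in Assumption~\ref{assump:boundary_growth_condition} the exponents $A_j$ do not depend on $\mathfrak{u}$. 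With the center anchor the frozen coordinates contribute $2^{s-\lvert\mathfrak{u}\rvert}\le 2^s$, the one-dimensional integrals give $\prod_{j\in\mathfrak{u}}t_j^{-A_j^*}/A_j^*$, and the subset sum together with $t_j^{-A_j^*}\ge 1$ yields $\prod_{j=1}^s(1+1/A_j^*)\,t_j^{-A_j^*}$, i.e.\ exactly the claimed $\tilde B_{n,s}$; symmetry then extends the bound to the other orthants with $\min(t_j,1-t_j)$.
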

\begin{proof}
We first consider the case when $\mathbf{t} \in [0, \frac{1}{2}]^s$. Notice that from~\eqref{eq:boundary_growth_condition}, ~\eqref{eq:optimization_aj} and~\eqref{eq:a_star}, we have
\begin{equation}
	\abs*{\partial^{\mathfrak{u}} g(\mathbf{t})} \leq B^{*}_{n, s} \prod_{j=1}^s {t_j}^{-A_j^{*} - \mathbbm{1}_{j \in \mathfrak{u}}}, \quad \mathfrak{u} \subseteq 1:s.
\end{equation}
Following the proof of Lemma 5.1 in~\cite{Owen06}, we have
\begin{equation}
	\label{eq:g_minus_g_tilde}
	\begin{split}
	\lvert g(\mathbf{t}) - \tilde{g}(\mathbf{t}) \rvert &\leq \sum_{\substack{\mathfrak{u} \subseteq 1:s \\ \mathfrak{u} \neq \varnothing}} \int_{[\mathbf{t}^{\mathfrak{u}}, \left(\bm{\frac{1}{2}}\right)^{\mathfrak{u}}] } \mathbbm{1}_{\mathbf{z}^{\mathfrak{u}}:\bm{\frac{1}{2}}^{-\mathfrak{u} } \notin K_{n, s} } \partial^{\mathfrak{u}} g(\mathbf{z}^{\mathfrak{u}} : \bm{\frac{1}{2}}^{-\mathfrak{u}} ) d\mathbf{z}^{\mathfrak{u}}  \\
	&\leq 2^s B^{*}_{n, s} \sum_{\substack{\mathfrak{u} \subseteq 1:s \\ \mathfrak{u} \neq \varnothing}} \int_{[\mathbf{t}^{\mathfrak{u}}, \left(\bm{\frac{1}{2}}\right)^{\mathfrak{u}}] }  \prod_{j \in \mathfrak{u}} {z_j}^{-A_j^{*} - {1}} d\mathbf{z}^{\mathfrak{u}} \\
	&\leq 2^s B^{*}_{n, s} \sum_{\substack{\mathfrak{u} \subseteq 1:s \\ \mathfrak{u} \neq \varnothing}} \prod_{j \in \mathfrak{u}} \frac{t_j^{-A_j^{*}}}{A_j}\\
	&= 2^s B^{*}_{n, s} \left( \prod_{j=1}^s \left( 1 + \frac{t_j^{-A_j^{*}}}{A_j} \right) - 1 \right)\\
	&\leq 2^s B^{*}_{n, s} \prod_{j=1}^s \left( 1 + \frac{1}{A_j^{*}} \right) t_j^{-A_j^{*}}. 
	\end{split}
\end{equation}
The rest of the cases when $\mathbf{t} \notin [0, \frac{1}{2}]^s$ can be similarly derived by symmetry. This concludes the proof.

\end{proof}

Now we define the set $G(\delta) \coloneqq \{\mathbf{t} \in [0, 1]^s: \prod_{j=1}^s t_j \geq \delta\}$ for $\delta > 0$ and consider the integration in $G(\delta)$ and $[0, 1]^s - G(\delta)$. 
\begin{lemma}[Integral bound for the difference between the integrand and its Sobol' low-variation extension]
	When $\lvert g(\mathbf{t}) - \tilde{g}(\mathbf{t}) \rvert \leq \tilde{B}_{n,s}  \prod_{j=1}^s {t_j}^{-A_j}$, $\mathbf{t} \in [0, 1]^s$, and $A_j < 1$ are distinct for $j = 1, \dotsc, s$, we have 
	\begin{equation}
			\int_{[0, 1]^s - G(\delta)} \lvert g(\mathbf{t}) - \tilde{g}(\mathbf{t})  \rvert d\mathbf{t} \leq \tilde{B}_{n,s} \sum_{j=1}^s \frac{\delta^{1-A_j}}{(A_j - 1) \phi^{\prime}(A_j) },
	\end{equation}
	where $\phi$ is the polynomial $\phi(x) \coloneqq \prod_{j=1}^s (x - {A_j})$. And when $A_1 = \cdots = A_s = A < 1$, we have
	\begin{equation}
		\int_{[0, 1]^s - G(\delta)} \lvert g(\mathbf{t}) - \tilde{g}(\mathbf{t})  \rvert d\mathbf{t} \leq - \tilde{B}_{n,s} \sum_{k=1}^{s} \frac{\delta^{1-A}}{(1-A)^{k}} \cdot \frac{\log^{s-k} \delta}{(s-k)!}.
	\end{equation}
	\label{lemma:integration_difference_integrand_sobol_low_variation_extension}
\end{lemma}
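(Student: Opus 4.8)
The plan is to observe that the hypothesis already reduces the statement to an entirely explicit integral. By the assumed pointwise bound,
\[
\int_{[0,1]^s - G(\delta)} \lvert g(\mathbf{t}) - \tilde{g}(\mathbf{t}) \rvert\, d\mathbf{t} \;\le\; \tilde{B}_{n,s} \int_{\{\mathbf{t}\in[0,1]^s:\ \prod_j t_j < \delta\}} \prod_{j=1}^s t_j^{-A_j}\, d\mathbf{t},
\]
so everything comes down to evaluating $J_s(\delta) := \int_{[0,1]^s,\ \prod_j t_j<\delta} \prod_{j=1}^s t_j^{-A_j}\, d\mathbf{t}$ in closed form. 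Since each $A_j<1$, the factor $t_j^{-A_j}$ is integrable on $[0,1]$, the integral is finite, and Tonelli's theorem applies throughout. The first move is the substitution $t_j = e^{-u_j}$ (i.e.\ $u_j = -\log t_j \ge 0$), which linearizes the corner cap $\{\prod_j t_j<\delta\}$ into the half-space $\{\sum_j u_j > L\}$ with $L := -\log\delta \ge 0$ (we use $\delta = Cn^{-1}\le 1$), and converts the power singularity into an exponential weight:
\[
J_s(\delta) \;=\; \int_{u_j\ge 0,\ \sum_j u_j > L} \prod_{j=1}^s e^{-b_j u_j}\, d\mathbf{u}, \qquad b_j := 1-A_j > 0 .
\]

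For the distinct-$A_j$ case I would evaluate this by taking the Laplace transform in $L$: integrating $\mathbbm{1}\{\sum_j u_j > L\}$ against $e^{-pL}$ first and then in $\mathbf{u}$ gives $\int_0^\infty e^{-pL} J_s(e^{-L})\, dL = \tfrac1p\bigl(\prod_j b_j^{-1} - \prod_j (b_j+p)^{-1}\bigr)$, a rational function of $p$ with a removable singularity at $p=0$ and simple poles at $p=-b_j$. A residue / partial-fraction inversion then yields $J_s(\delta) = \sum_{j=1}^s \frac{e^{-b_j L}}{b_j\,\prod_{k\ne j}(b_k-b_j)}$; since $e^{-b_j L} = \delta^{1-A_j}$ and $\prod_{k\ne j}(b_k-b_j) = \prod_{k\ne j}(A_j-A_k) = \phi'(A_j)$, this is precisely the asserted sum $\sum_j \frac{\delta^{1-A_j}}{(1-A_j)\phi'(A_j)}$. (One can sanity-check the degenerate limit: setting $\delta=1$ forces $L=0$ and all $\delta^{1-A_j}=1$, and indeed $\sum_j \frac1{(1-A_j)\phi'(A_j)} = \prod_j \frac1{1-A_j} = \int_{[0,1]^s}\prod_j t_j^{-A_j}\,d\mathbf{t}$.) For the equal case $A_1=\dots=A_s=A$, the weight $\prod_j e^{-b u_j}$ depends on $\mathbf{u}$ only through $\sigma := \sum_j u_j$, so I would apply the simplex layer-cake identity $\int_{[0,\infty)^s} f(\sum_j u_j)\, d\mathbf{u} = \int_0^\infty f(v)\,\tfrac{v^{s-1}}{(s-1)!}\, dv$ to reduce $J_s$ to the one-dimensional incomplete-Gamma integral $\tfrac1{(s-1)!}\int_L^\infty v^{s-1} e^{-(1-A)v}\, dv$; repeated integration by parts gives the finite sum $e^{-(1-A)L}\sum_{k=0}^{s-1}\frac{L^k}{k!\,(1-A)^{s-k}}$, and back-substituting $e^{-(1-A)L}=\delta^{1-A}$, $L=-\log\delta$ and reindexing by $m=s-k$ produces the claimed expression involving the powers of $\log\delta$.

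The main obstacle is the bookkeeping in the distinct case. If one instead tries the elementary route of induction on $s$ by integrating out one coordinate at a time, the intermediate single-variable integrals of the form $\int_0^1 t^{-(A_j+1-A_s)}\,dt$ need not converge (when $A_j \ge A_s$), so the order of integration and the splitting into "full minus simplex" pieces must be arranged to avoid ever splitting off a divergent piece — which is exactly why working throughout with the globally convergent cap integral, via the Laplace transform, is the cleaner path. Beyond that, the remaining care is purely with signs: keeping $\delta<1$ so that $L>0$ and $\log\delta<0$, tracking consistently the orientation $(1-A_j)$ versus $(A_j-1)$ in the denominators, and keeping the parity factors $(-1)^{s-k}$ straight when expanding $(-\log\delta)^{s-k}$ into $\log^{s-k}\delta$ in the equal-$A$ formula.
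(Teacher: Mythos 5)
Your proposal is correct and arrives at the right closed forms, but it evaluates the key integral by a genuinely different route than the paper. The paper (supplementary material~\ref{sec:proof_of_lemma_integration_difference_integrand_sobol_low_variation_extension}, following Sobol' 1973) also passes to exponential coordinates, but then proves the identity $\int_{K_\epsilon}\prod_j t_j^{-A_j}\,d\mathbf{t}=\tfrac{1}{\phi(1)}+\sum_j\tfrac{\epsilon^{1-A_j}}{(A_j-1)\phi'(A_j)}$ by induction on $s$: it integrates out the last coordinate, which splits the integral into an $(s-1)$-dimensional copy of the same problem plus a second piece with shifted exponents $A_j\mapsto 1-A_s+A_j$, applies the induction hypothesis to both, and obtains the complement bound by subtracting from $\int_{[0,1]^s}\prod_j t_j^{-A_j}=1/\phi(1)$; the equal-$A$ case is handled by unrolling the recursion $\mathcal{A}_s=\tfrac{1}{1-A}\mathcal{A}_{s-1}-\tfrac{\epsilon^{1-A}}{1-A}\lambda_{s-1}$ with the simplex volumes $\lambda_k$. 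You instead work directly with the cap integral and evaluate it in one shot: a Laplace transform in $L=-\log\delta$ with partial-fraction (residue) inversion for distinct $A_j$, and the layer-cake/simplex identity plus an incomplete-gamma integration by parts for equal $A$; both computations check out (your $s=1$ and $\delta=1$ sanity checks are the right ones, and $\prod_{k\neq j}(b_k-b_j)=\phi'(A_j)$ is correct). Your route buys a globally convergent argument with no recursive bookkeeping — and your caveat is apt: the paper's recursive call uses shifted exponents that can exceed $1$, so its induction hypothesis should really be stated for all distinct $A_j\neq 1$ over the bounded region $K_\epsilon$ (harmless, but worth noting); the paper's route buys elementarity, needing no transform inversion to justify. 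One reconciliation point: your final expressions $\sum_j\frac{\delta^{1-A_j}}{(1-A_j)\phi'(A_j)}$ and $\delta^{1-A}\sum_{k=1}^{s}\frac{(-\log\delta)^{s-k}}{(1-A)^{k}(s-k)!}$ are the correct, nonnegative values; the right-hand sides as printed in the lemma, with $(A_j-1)$ in the denominator and $\log^{s-k}\delta$ without the compensating sign, carry sign typos (already visible for $s=1$, where the printed bound is negative) that also slip into the last lines of the paper's own appendix, so your derivation proves the corrected statement rather than the literal one.
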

Notice that the first part of Lemma~\ref{lemma:integration_difference_integrand_sobol_low_variation_extension} addresses a typo in Lemma 5.2 in~\cite{Owen06}, which cites Lemma 3 in~\cite{Sobol1973calculation}. The original Lemma is stated as follows:
\begin{lemma}[Sobol', 1973]
	\label{lemma:sobol_1973}
	For distinct $A_1, \dotsc, A_j < 1$, we have
	\begin{equation}
		\int_{G(c)} \prod_{j=1}^s t_j^{-A_j} d\bm{t} = \frac{1}{\phi(1)} + \sum_{j=1}^s \frac{c^{1 - A_j}}{(A_j - 1)\phi^{\prime} (A_j)},
 	\end{equation}
	where $\phi$ is defined the same as in Lemma~\ref{lemma:integration_difference_integrand_sobol_low_variation_extension}. \footnote{However, to the best of authors' knowledge, $G(\delta)$ is not exactly defined as above in~\cite{Sobol1973calculation}.}
\end{lemma}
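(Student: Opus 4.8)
The plan is to linearize the product constraint by the substitution $t_j=e^{-u_j}$ and then evaluate the resulting simplex integral with a Laplace transform. Putting $t_j=e^{-u_j}$ for $j=1,\dots,s$ maps $[0,1]^s$ onto the orthant $\{\bm u\ge\bm 0\}$, converts the constraint $\prod_j t_j\ge c$ into $\sum_j u_j\le L$ with $L\coloneqq-\log c\ge 0$, and gives $\prod_j t_j^{-A_j}\,d\bm t=\exp\!\big(\sum_j A_j u_j\big)\exp\!\big(-\sum_j u_j\big)\,d\bm u=\prod_j e^{-b_j u_j}\,d\bm u$ with $b_j\coloneqq 1-A_j>0$. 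Hence, defining $G(L)\coloneqq\int_{\{\bm u\ge\bm 0:\ \sum_j u_j\le L\}}\prod_j e^{-b_j u_j}\,d\bm u$, the integral in the lemma equals $G(-\log c)$, and it suffices to prove
\[
G(L)=\frac{1}{\prod_{j=1}^s b_j}-\sum_{j=1}^s\frac{e^{-b_j L}}{b_j\prod_{k\ne j}(b_k-b_j)}.
\]
This rearranges term by term into the claimed formula: $\prod_j b_j=\prod_j(1-A_j)=\phi(1)$; $\prod_{k\ne j}(b_k-b_j)=\prod_{k\ne j}(A_j-A_k)=\phi'(A_j)$; $e^{-b_j L}=c^{1-A_j}$; and $b_j=-(A_j-1)$.

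For the displayed identity I would take the Laplace transform of $G$ in $L$. As $b_j>0$ all integrands are nonnegative, so Tonelli's theorem justifies exchanging the integrations:
\[
\int_0^\infty e^{-pL}G(L)\,dL=\int_{\{\bm u\ge\bm 0\}}\Big(\prod_j e^{-b_j u_j}\Big)\int_{\sum_j u_j}^{\infty}e^{-pL}\,dL\,d\bm u=\frac{1}{p}\int_{\{\bm u\ge\bm 0\}}\prod_j e^{-(p+b_j)u_j}\,d\bm u=\frac{1}{p}\prod_{j=1}^s\frac{1}{p+b_j}
\]
for $p>0$. Since the $A_j$, hence the $b_j$, are distinct, the rational function $p\mapsto\big(p\prod_j(p+b_j)\big)^{-1}$ has only simple poles, at $0$ and $-b_1,\dots,-b_s$, so its partial-fraction expansion is read off from the residues: $\big(p\prod_j(p+b_j)\big)^{-1}=\big(\prod_j b_j\big)^{-1}p^{-1}-\sum_j\big(b_j\prod_{k\ne j}(b_k-b_j)\big)^{-1}(p+b_j)^{-1}$. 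Inverting termwise, with $\mathcal{L}^{-1}[p^{-1}]=1$ and $\mathcal{L}^{-1}[(p+b_j)^{-1}]=e^{-b_j L}$ on $L>0$, and using uniqueness of the Laplace inverse for the bounded continuous function $G$, gives exactly the displayed identity.

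A transform-free alternative is induction on $s$: integrating out $u_s$ over $[0,\,L-\sum_{j<s}u_j]$ expresses $G_s(L)$ as $b_s^{-1}$ times the difference of two $(s-1)$-dimensional integrals of the same form, one with exponents $(b_j)_{j<s}$ and one with exponents $(b_j-b_s)_{j<s}$, closing the induction provided the hypothesis is stated for arbitrary \emph{distinct, nonzero} real exponents rather than only positive ones (note $b_j-b_s$ may be negative); this is harmless because $G$ is finite for all real exponents, the domain $\{\bm u\ge\bm 0:\ \sum u_j\le L\}$ being compact, and the asserted formula is an identity of rational--exponential functions of the parameters. I expect the only point needing genuine care to be the justification of the termwise Laplace inversion (respectively the extension of the inductive hypothesis to signed exponents); the distinctness of the $A_j$ is used precisely to guarantee simple poles, and everything else --- the change of variables and the bookkeeping $b_j\leftrightarrow 1-A_j$ --- is routine. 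A quick sanity check is $c=1$ (so $L=0$ and $G(0)=0$ since the domain collapses to a point), where the formula reduces to the partial-fraction identity $1/\phi(1)=\sum_j 1/\big((1-A_j)\phi'(A_j)\big)$.
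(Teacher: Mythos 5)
Your proof is correct, but it takes a genuinely different route from the paper. Both arguments start by linearizing the constraint (you set $t_j=e^{-u_j}$; the paper sets $t_j=e^{x_j}$), but from there the paper proceeds by induction on $s$: it integrates out the last coordinate, which splits the integral into two $(s-1)$-dimensional integrals of the same exponential form (one with exponents $1-A_j$, one with exponents $A_s-A_j$), applies the induction hypothesis to each, and recombines the resulting terms algebraically into the stated partial-fraction form. You instead compute the Laplace transform of $G(L)$ in $L=-\log c$, obtain the closed form $\bigl(p\prod_{j}(p+b_j)\bigr)^{-1}$ with $b_j=1-A_j$, expand in partial fractions (distinctness of the $A_j$ giving simple poles), and invert termwise, invoking uniqueness of the Laplace transform for bounded continuous functions. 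Your route buys a one-shot derivation in which the final formula appears directly as a residue expansion, and it sidesteps a point that the paper's induction glosses over and that you correctly flag in your own inductive sketch: the second application of the induction hypothesis involves exponents $A_s-A_j$ that may be negative (equivalently, effective parameters $A_j'=1-A_s+A_j$ possibly exceeding $1$), so the hypothesis must be stated for arbitrary distinct exponents with $1-A_j'\neq 0$, which is harmless since the truncated domain keeps the integral finite, but deserves mention. Conversely, the paper's elementary induction needs no transform machinery or inversion-uniqueness theorem, and its structure extends directly to the repeated-exponent case $A_1=\cdots=A_s=A$ treated in the second half of Lemma~\ref{lemma:integration_difference_integrand_sobol_low_variation_extension}, which in your framework would correspond to a higher-order pole. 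Your sanity check at $c=1$, reducing the identity to $1/\phi(1)=\sum_j 1/\bigl((1-A_j)\phi'(A_j)\bigr)$, is a nice confirmation of the bookkeeping.
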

The exact derivation of Lemma~\ref{lemma:integration_difference_integrand_sobol_low_variation_extension} and~\ref{lemma:sobol_1973} are provided in the supplementary material~\ref{sec:proof_of_lemma_integration_difference_integrand_sobol_low_variation_extension}. Now we have,
\begin{equation}
	\begin{split}
		\int_{[0, \frac{1}{2}]^s - K_{n,s}} \lvert g(\mathbf{t}) - \tilde{g}(\mathbf{t})  \rvert d\mathbf{t} &\leq \int_{[0, \frac{1}{2}]^s - G(Cn^{-1})} \tilde{B}_{n,s}  \prod_{j=1}^s {t_j}^{-A_j^{*}} d\mathbf{t}.
	\end{split}
\end{equation}
and by symmetricity, we have
\begin{equation}
	\label{eq:integration_err_g_tilde_1_Kns}
	\begin{split}
	\int_{[0, 1]^s - K_{n,s}} \lvert g(\mathbf{t}) - \tilde{g}(\mathbf{t})  \rvert d\mathbf{t} &\leq 2^s \int_{[0, 1]^s - G(Cn^{-1})} \tilde{B}_{n,s}  \prod_{j=1}^s {t_j}^{-A_j^{*}} d\mathbf{t}\\
	&\leq 2^s \tilde{B}_{n,s} \sum_{j=1}^s \frac{(Cn^{-1})^{1-A_j^{*}}}{(A_j^{*} - 1) \phi^{\prime}(A_j^{*}) }.
	\end{split}
\end{equation}



In the following we derive an upper bound for the Hardy--Krause variation of the Sobol' low-variation extension $\tilde{g}$. Following~\eqref{eq:sobol_low_variation_extension} and fix the anchor point $\mathbf{c} = (\frac{1}{2}, \dotsc, \frac{1}{2})$, we have
\begin{equation}
	\begin{split}
		V_{HK}(\tilde{g}) &\leq \sum_{\substack{\mathfrak{u} \subseteq 1:s \\ \mathfrak{u} \neq \varnothing}} \int_{[0, 1]^{\abs{\mathfrak{u}}}} \mathbbm{1}_{\mathbf{t} \in K_{n, s}} \abs*{\partial^{\mathfrak{u}} g\left(\mathbf{t}^{\mathfrak{u}}: \bm{\frac{1}{2}}^{-\mathfrak{u}} \right)} d\mathbf{t}^{\mathfrak{u}}.
	\end{split}
\end{equation}
Notice that we have, 
\begin{align}
	\abs*{\partial^{\mathfrak{u}} g\left(\mathbf{t}^{\mathfrak{u}}: \bm{\frac{1}{2}}^{-\mathfrak{u}} \right)} \leq B^{*}_{n,s} \prod_{j \in \mathfrak{u}} \min(t_j, 1 - t_j)^{-A_j^* - 1} \cdot \left( \frac{1}{2}\right)^{\abs{-\mathfrak{u}} },
	\label{eq:variation_g_tilde}
\end{align}
where $A_j^{*}$ is determined by the optimization problem~\eqref{eq:optimization_aj}. Due to the symmetry, we have
\begin{equation}
	\begin{split}
	& \int_{[0, 1]^{\abs{\mathfrak{u}}}} \mathbbm{1}_{\mathbf{t} \in K_{n, s}} \abs*{\partial^{\mathfrak{u}} g\left(\mathbf{t}^{\mathfrak{u}}: \bm{\frac{1}{2}}^{-\mathfrak{u}} \right)} d\mathbf{t}^{\mathfrak{u}}\\
	 &\leq \int_{[0, 1]^{\abs{\mathfrak{u}}}} \mathbbm{1}_{\mathbf{t} \in K_{n, s}} B^{*}_{n,s} \prod_{j \in \mathfrak{u}} \min(t_j, 1 - t_j)^{-A_j^* - 1} \cdot \left( \frac{1}{2}\right)^{\abs{-\mathfrak{u}} } d\mathbf{t}^{\mathfrak{u}}\\
	&\leq 2^{\abs{\mathfrak{u}}} \int_{\prod_{j \in \mathfrak{u}} t_j \geq Cn^{-1} }   B^{*}_{n,s} \prod_{j \in \mathfrak{u}}  t_j^{-A_j^* - 1} \cdot \left( \frac{1}{2}\right)^{\abs{-\mathfrak{u}} } d\mathbf{t}^{\mathfrak{u}}. 
	\end{split}
\end{equation}
Following~\cite{Owen06}, assuming the distinct values for $A_1^{*}, \dotsc, A_s^{*} < 1$, we denote the index $m(\mathfrak{u}) \coloneqq \max_{j \in \mathfrak{u}} A_j$ and the set $\mathfrak{u}_{-} \coloneqq \mathfrak{u} \setminus m(\mathfrak{u})$. We have
\begin{equation}
	\label{eq:hk_variation_g_tilde}
	\begin{split}
		V_{HK}(\tilde{g}) &\leq \sum_{\substack{\mathfrak{u} \subseteq 1:s \\ \mathfrak{u} \neq \varnothing}} \int_{[0, 1]^{\abs{\mathfrak{u}}}} \mathbbm{1}_{\mathbf{t} \in K_{n, s}} \abs*{\partial^{\mathfrak{u}} g\left(\mathbf{t}^{\mathfrak{u}}: \bm{\frac{1}{2}}^{-\mathfrak{u}} \right)} d\mathbf{t}^{\mathfrak{u}}\\
		 &\leq \sum_{\substack{\mathfrak{u} \subseteq 1:s \\ \mathfrak{u} \neq \varnothing}} 2^{\abs{\mathfrak{u}}} \int_{\prod_{j \in \mathfrak{u}} t_j \geq Cn^{-1} }  B^{*}_{n,s} \prod_{j \in \mathfrak{u}} t_j^{-A_j^* - 1} \cdot \left( \frac{1}{2}\right)^{\abs{-\mathfrak{u}} } d\mathbf{t}^{\mathfrak{u}}\\
	&\leq B^{*}_{n,s} \sum_{\substack{\mathfrak{u} \subseteq 1:s \\ \mathfrak{u} \neq \varnothing}} 2^{\abs{\mathfrak{u}}} \left( \frac{1}{2}\right)^{\abs{-\mathfrak{u}} } \int_{\prod_{j \in \mathfrak{u}} t_j \geq Cn^{-1} } \prod_{j \in \mathfrak{u}} t_j^{-A_j^* - 1}  d\mathbf{t}^{\mathfrak{u}}\\
	&\leq B^{*}_{n, s} \sum_{\substack{\mathfrak{u} \subseteq 1:s \\ \mathfrak{u} \neq \varnothing}} 2^{\abs{\mathfrak{u}}} \left( \frac{1}{2}\right)^{\abs{-\mathfrak{u}} } \int_{[0, 1]^{\abs{\mathfrak{u}_{-} }}} \frac{(Cn^{-1})^{-A_{m(\mathfrak{u})}^{*} }}{A_{m(\mathfrak{u})}^{*}} \prod_{j \in \mathfrak{u}_{-}} t_j^{A_{m(\mathfrak{u})}^{*} -A_j^* - 1}  d\mathbf{t}^{\mathfrak{u}_{-}}\\
	&= B^{*}_{n, s} \sum_{\substack{\mathfrak{u} \subseteq 1:s \\ \mathfrak{u} \neq \varnothing}} 2^{\abs{\mathfrak{u}}} \left( \frac{1}{2}\right)^{\abs{-\mathfrak{u}} } \frac{(Cn^{-1})^{-A_{m(\mathfrak{u})}^{*} }}{A_{m(\mathfrak{u})}^{*}} \prod_{j \in \mathfrak{u}_{-}} \frac{1}{A_{m(\mathfrak{u})}^{*} -A_j^*}\\
	&\leq B^{*}_{n, s} \left( \frac{5}{2} \right)^s \sum_{\substack{\mathfrak{u} \subseteq 1:s \\ \mathfrak{u} \neq \varnothing}} \frac{1}{A_{m(\mathfrak{u})}^{*}} \prod_{j \in \mathfrak{u}_{-}} \frac{1}{A_{m(\mathfrak{u})}^{*} -A_j^*} (C^{-1} n)^{\max_{j} A_{j}^{*} }. 
	\end{split}
\end{equation}
Following the remark in~\cite{Owen06}, when the values of $A_1^{*}, \dotsc, A_s^{*}$ are not distinct, we can slightly increase some of the values to make them distinct. Specifically, we can perturb the values such that $A_{i}^{*} - A_{j}^{*} \geq \bar{\delta}$ for $i \neq j$, $\bar{\delta} > 0$. 
{Now, we have obtained the ingredients to present the nonasymptotic upper bound for the RQMC upper bound. }
\begin{theorem}[Nonasymptotic RQMC error estimate] For a continuous integrand $g$ on $[0, 1]^s$, whose mixed first-order derivative exists, and assume for the randomized low-discrepancy sequence, $\mathbb{E} \left[ \Delta^*(\tilde{\mathbf{t}}_1, \dotsc, \tilde{\mathbf{t}}_n) \right] \leq C_{\epsilon, s} n^{-1 + \epsilon} $ for $\epsilon > 0$. $C_{\epsilon, s} < +\infty$ depends on $\epsilon, s$ but not $n$. Assuming $A_j^{*}$, $j = 1, \dotsc, s$ from~\eqref{eq:optimization_aj} takes distinct values, then the expected integration error for the RQMC method with a quadrature size of $n$ satisfies
	\begin{align}
		\mathbb{E} [ \lvert I(g) - \hat{I}_n^r(g) \rvert ] \leq C_1(n) n^{-1 + \max_j A_j^*} + C_2(n) n^{-1 + \epsilon + \max_j A_j^*}
		\label{eq:QMC_convergence_model}
	\end{align}
where $\epsilon > 0$, and $C_1(n) = 2^{s+1} \tilde{B}_{n,s} \sum_{j=1}^s \frac{C^{1-A_j^{*}}}{(A_j^{*} - 1) \phi^{\prime}(A_j^{*}) } $ and \\ $C_2(n) = C_{\epsilon, s} B^{*}_{n, s} \left( \frac{5}{2} \right)^s \sum_{\substack{\mathfrak{u} \subseteq 1:s \\ \mathfrak{u} \neq \varnothing}} \frac{1}{A_{m(\mathfrak{u})}^{*}} \prod_{j \in \mathfrak{u}_{-}} \frac{1}{A_{m(\mathfrak{u})}^{*} -A_j^*} (C^{-1})^{\max_{j} A_{j}^{*} }$. 
	\label{thm:nonasymptotic_QMC_error_estimate}
\end{theorem}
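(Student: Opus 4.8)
The plan is to assemble the theorem by combining the three ingredients already established: the three-epsilon decomposition in~\eqref{eq:expected_rqmc_upper_bound}, the integral bound on $g-\tilde g$ in~\eqref{eq:integration_err_g_tilde_1_Kns}, and the Hardy--Krause variation bound on $\tilde g$ in~\eqref{eq:hk_variation_g_tilde}. First I would recall~\eqref{eq:expected_rqmc_upper_bound}, namely
\begin{align*}
\mathbb{E}[ \lvert I(g) - \hat{I}^{r}_n(g) \rvert ] \leq 2\int_{[0, 1]^s - K_{n, s}} \lvert g(\mathbf{t}) - \tilde{g}(\mathbf{t}) \rvert d\mathbf{t} + \mathbb{E}\left[ \Delta^*(\tilde{\mathbf{t}}_1, \dotsc, \tilde{\mathbf{t}}_n) \right] V_{HK}(\tilde{g}),
\end{align*}
and then bound each of the two terms on the right separately with $\delta(n) = Cn^{-1}$.

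For the first term, I would substitute the bound from~\eqref{eq:integration_err_g_tilde_1_Kns}, which gives $2 \cdot 2^s \tilde{B}_{n,s} \sum_{j=1}^s \frac{(Cn^{-1})^{1-A_j^*}}{(A_j^* - 1)\phi'(A_j^*)}$; pulling out the factor $n^{-(1-A_j^*)} = n^{-1+A_j^*}$ and bounding $n^{-1+A_j^*} \leq n^{-1+\max_j A_j^*}$ for every $j$ (valid since $n\geq 1$) yields exactly $C_1(n) n^{-1+\max_j A_j^*}$ with $C_1(n) = 2^{s+1}\tilde{B}_{n,s}\sum_{j=1}^s \frac{C^{1-A_j^*}}{(A_j^*-1)\phi'(A_j^*)}$. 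For the second term, I would use the hypothesis $\mathbb{E}[\Delta^*] \leq C_{\epsilon,s} n^{-1+\epsilon}$ together with the variation bound~\eqref{eq:hk_variation_g_tilde}, i.e.\ $V_{HK}(\tilde g) \leq B^*_{n,s}(5/2)^s \sum_{\mathfrak{u}\neq\emptyset} \frac{1}{A^*_{m(\mathfrak{u})}}\prod_{j\in\mathfrak{u}_-}\frac{1}{A^*_{m(\mathfrak{u})}-A^*_j}(C^{-1}n)^{\max_j A^*_j}$; multiplying the two and collecting the $n$-powers gives $n^{-1+\epsilon}\cdot n^{\max_j A^*_j} = n^{-1+\epsilon+\max_j A^*_j}$, with the $n$-independent constants absorbed into $C_2(n) = C_{\epsilon,s} B^*_{n,s}(5/2)^s \sum_{\mathfrak{u}\neq\emptyset}\frac{1}{A^*_{m(\mathfrak{u})}}\prod_{j\in\mathfrak{u}_-}\frac{1}{A^*_{m(\mathfrak{u})}-A^*_j}(C^{-1})^{\max_j A^*_j}$. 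Adding the two contributions produces~\eqref{eq:QMC_convergence_model}.

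There is essentially no deep obstacle here, since all the hard analytic work is in the preceding lemmas; the proof is a bookkeeping exercise in tracking which quantities carry $n$-dependence. The one point requiring a little care is the distinctness of the $A_j^*$: the formulas in Lemma~\ref{lemma:integration_difference_integrand_sobol_low_variation_extension} and in~\eqref{eq:hk_variation_g_tilde} feature the factors $\phi'(A_j^*)$ and $\frac{1}{A^*_{m(\mathfrak{u})}-A^*_j}$, which are only defined when the $A_j^*$ are pairwise distinct. This is exactly the hypothesis of the theorem, so I would simply invoke it; I would also note (as already remarked after~\eqref{eq:hk_variation_g_tilde}) that the non-distinct case is recovered by an arbitrarily small perturbation $A_i^* - A_j^* \geq \bar\delta$, at the cost of constants blowing up as $\bar\delta\to 0$, or alternatively by passing to the confluent formula in the second half of Lemma~\ref{lemma:integration_difference_integrand_sobol_low_variation_extension}. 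Finally I would remark that $C_1(n)$ and $C_2(n)$ retain an $n$-dependence only through $\tilde B_{n,s}$, $B^*_{n,s}$, and the optimizers $A_j^* = A_j(\mathbf{t}_c^*)$, which is the entire point of the nonasymptotic refinement and is what Section~\ref{sec:integrand_infty} will make explicit for the lognormal examples.
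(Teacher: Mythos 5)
Your proposal is correct and follows essentially the same route as the paper, whose proof is exactly this substitution of~\eqref{eq:integration_err_g_tilde_1_Kns} and~\eqref{eq:hk_variation_g_tilde} into the decomposition~\eqref{eq:expected_rqmc_upper_bound}, with the star-discrepancy hypothesis supplying the $n^{-1+\epsilon}$ factor. Your added remarks on collecting the $n$-powers and on the distinctness (or $\bar\delta$-perturbation) of the $A_j^*$ only make explicit what the paper states in the surrounding text.
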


\begin{proof}
	{Substituting Equation~\eqref{eq:integration_err_g_tilde_1_Kns} and~\eqref{eq:hk_variation_g_tilde} into~\eqref{eq:expected_rqmc_upper_bound}} leads to the conclusion.
\end{proof}
The assumption $\mathbb{E} \left[ \Delta^*(\tilde{\mathbf{t}}_1, \dotsc, \tilde{\mathbf{t}}_n) \right] \leq C_{\epsilon, s} n^{-1 + \epsilon} $ is justified in~\cite{Owen06}, for a $(t, s)$-sequence with Owen or Matou\v sek scrambling. The error estimate is also valid for other randomized low-discrepancy sequences when the star-discrepancy assumption is satisfied. Theorem~\ref{thm:nonasymptotic_QMC_error_estimate} is the contribution of this work to the nonasymptotic RQMC error estimate model. In the next section, we will analyze the nonasymptotic error bound for two specific examples.
\begin{remark}[Nonasymptotics rather than singularity]
	In some cases, for instances, the two examples considered in the next Section~\ref{sec:integrand_infty}, both integrands exhibit singularities at the boundary. Nevertheless, the value of $A_j^*$ from~\eqref{eq:a_star} converges to 0 as $n\to \infty$. In this situation, the convergence rate for the RQMC method becomes the optimal asymptotic rate $\mathcal{O}(n^{-1+\epsilon})$ for $\epsilon>0$, as derived in~\cite{Owen06}, in the presence of the singularities and hence infinite variations. However, a suboptimal convergence rate may be observed for a finite sample size $n$, as indicated by Theorem~\ref{thm:nonasymptotic_QMC_error_estimate}. 
\end{remark}

\begin{example}
	\label{example:2}
	We consider an example where given $n_1$, the convergence rate for $n > n_1$ can be explicitly found. Let the error have the following upper bound:
	\begin{equation}
		\exp(\sqrt{\mathcal{C} \log n}) n^{-1 + \sqrt{\frac{\mathcal{C}}{\log n}}}
	\end{equation}
	for a constant $\mathcal{C} > 0$. Notice that this upper bound is similar to the ones in the two examples that we will consider in the next section, in the sense that when $n$ increases, the term $\exp(\sqrt{\mathcal{C} \log n})$ increases while the exponent term $-1 + \sqrt{\frac{\mathcal{C}}{\log n}}$ decreases. Specifically, $\exp(\sqrt{\mathcal{C} \log n}) = o(n^{\alpha})$ for $\alpha > 0$, $n \to \infty$. To see this, we have
	\begin{equation}
		\lim_{n \to \infty} \frac{\exp(\sqrt{\mathcal{C} \log n})}{n^{\alpha}} = \lim_{n \to \infty} \exp(\sqrt{\mathcal{C} \log n} - \alpha \log n) = 0.
	\end{equation}
	In the following we try to illustrate the convergence of such example. Let $n > n_1$, we start from the following inequality:
	\begin{equation}
		2 \sqrt{\mathcal{C} \log n} \leq \sqrt{\mathcal{C} \log n_1} + \sqrt{\mathcal{C} \frac{\log^2 n}{\log n_1}}.
	\end{equation}
	We have,
	\begin{equation}
		\begin{split}
			\sqrt{\mathcal{C} \log n} - \sqrt{\mathcal{C} \log n_1} &\leq  \sqrt{\mathcal{C} \frac{\log^2 n}{\log n_1}} - \sqrt{\mathcal{C} \log n}\\
			&= \left( \sqrt{\mathcal{C} \frac{1}{\log n_1}} - \sqrt{\mathcal{C} \frac{1}{\log n}} \right) \log n. 
		\end{split}
	\end{equation}
	We take the exponential of both sides and have
	\begin{equation}
		\exp\left( \sqrt{\mathcal{C} \log n} - \sqrt{\mathcal{C} \log n_1} \right) \leq n^{\sqrt{\mathcal{C} \frac{1}{\log n_1}} - \sqrt{\mathcal{C} \frac{1}{\log n}}}.
	\end{equation}
	Rearranging the terms, we obtain
	\begin{equation}
		\exp\left( \sqrt{\mathcal{C} \log n}  \right) n^{-1 + \sqrt{\mathcal{C} \frac{1}{\log n}} } \leq \exp\left( \sqrt{\mathcal{C} \log n_1}  \right) n^{-1 + \sqrt{\mathcal{C} \frac{1}{\log n_1}} }. 
	\end{equation}
	We can explicitly find the convergence rate $n^{-1 + \sqrt{\mathcal{C} \frac{1}{\log n_1}} }$ for $n > n_1$. 
\end{example}

\section{Two examples of infinite variation integrands}
\label{sec:integrand_infty}


This section delves into two typical integration examples: the expectation estimate of lognormal random variables and the QoIs relating to the solution of elliptic PDEs with lognormal coefficients. These examples shed light on the intricate nature of infinite variation integrands.

\subsection{Lognormal random variable}
\label{subsec:lognormal}
We first analyze the integration problem~\eqref{eq:integration_problem}, where $g = \exp ( \boldsymbol{\sigma}^{T} {\Phi}^{-1})$, $\boldsymbol{\sigma} \in \mathbb{R}^s_{+}$, and ${\Phi}^{-1} : [0, 1]^s \to \mathbb{R}^s$ applies the inverse standard normal distribution CDF component-wise. The partial derivative of $g$ w.r.t. $\mathbf{t}_\mathfrak{u}$ is given by
\begin{align}
\frac{\partial g}{\partial \mathbf{t}_\mathfrak{u}} &= \frac{\partial}{\partial \mathbf{t}_\mathfrak{u}}\exp\left(\sum_{j = 1}^s \sigma_j \Phi^{-1}(t_j)\right) \nonumber\\
&= \exp\left(\sum_{j = 1}^s \sigma_j \Phi^{-1}(t_j)\right) \cdot \prod_{j \in \mathfrak{u}} \sigma_j {\partial^j \Phi^{-1}(t_j)}.
\end{align}
We need to analyze the behavior of $\partial \Phi^{-1}$. The asymptotic approximations for $\Phi^{-1}$ are given as follows: 
	\begin{align}
	\Phi^{-1} (t)=
	\begin{cases}
		-\sqrt{-2\log t} + o(1) & \quad t \to 0\\
		\sqrt{-2\log (1 - t)}+ o(1) & \quad t \to 1,
	\end{cases} 
	\label{eq:approx_phi_0}
	\end{align}
see~\cite{patel1996handbook}, Chapter~3.9, for instance. A nonasymptotic approximation of $\Phi^{-1}$ can be referred to Equation (26.2.23) of~\cite{abramowitz1968handbook}: When $0 < t \leq 1/2$,
\begin{equation}
	\label{eq:approx_phi_21}
	\Phi^{-1}(t) = -\tau + \frac{c_0 + c_1 \tau + c_2 \tau^2}{1 + d_1 \tau + d_2 \tau^2 + d_3 \tau^3} + \bar{\epsilon},
\end{equation}
where $\tau = \sqrt{-2\log t}$, and when $1/2 \leq t < 1$
\begin{equation}
	\label{eq:approx_phi_2}
	\Phi^{-1}(t) = \tau - \frac{c_0 + c_1 \tau + c_2 \tau^2}{1 + d_1 \tau + d_2 \tau^2 + d_3 \tau^3} + \bar{\epsilon},
\end{equation}
where $\tau = \sqrt{-2\log (1-t)}$. In~\eqref{eq:approx_phi_21} and~\eqref{eq:approx_phi_2},  $c_0 = 2.515517, c_1 = 0.802853, c_2 = 0.010328$, $d_1 = 1.432788, d_2 = 0.189269, d_3 = 0.001308$, and the error term $\abs{\bar{\epsilon}} < 4.5\times 10^{-4}$. Notice that in the range $\tau \in (\sqrt{\log 4}, +\infty)$, the following inequality will be used in the later derivations:
\begin{equation}
	\label{eq:inequality_tau}
	\abs*{\tau - \frac{c_0 + c_1 \tau + c_2 \tau^2}{1 + d_1 \tau + d_2 \tau^2 + d_3 \tau^3}} \leq \tau.
\end{equation}
Figure.~\ref{fig:tau_phi_t} graphically illustrates the behavior of $\abs{\Phi^{-1}(t)}$ and $\tau(t)$ for $10^{-7} < t < 1/2$ and the behavior of $\tau$ and $\tau - \frac{c_0 + c_1 \tau + c_2 \tau^2}{1 + d_1 \tau + d_2 \tau^2 + d_3 \tau^3}$ for $\tau > \sqrt{\log 4}$. $\abs{\Phi^{-1}(t)}$ is bounded by $\tau(t)$ in the range $(10^{-7}, 1/2)$ as plotted in the left panel. The right panel shows that $\tau - \frac{c_0 + c_1 \tau + c_2 \tau^2}{1 + d_1 \tau + d_2 \tau^2 + d_3 \tau^3}$ is positive and bounded by $\tau$ for $\tau > \sqrt{\log 4}$.
\begin{figure}
	\centering
	\includegraphics[width=0.40\linewidth]{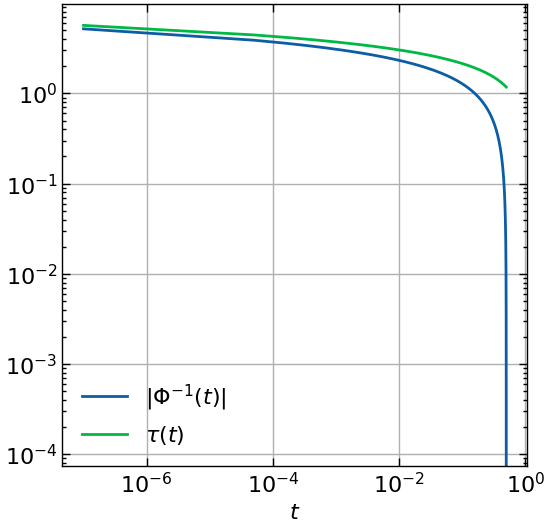}
	\includegraphics[width=0.40\linewidth]{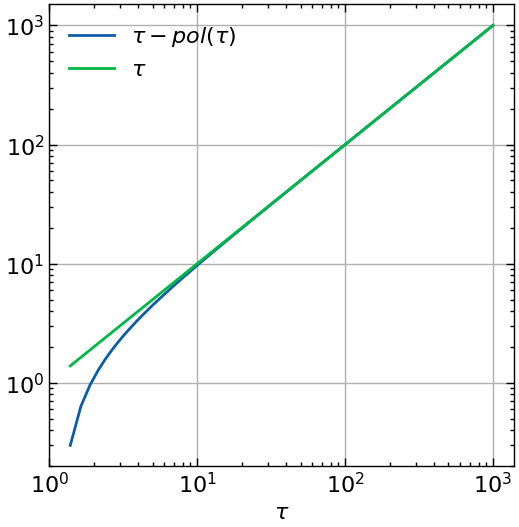}
	\hfill
	\caption{Left: The plot of $\abs{\Phi^{-1}(t)}$ and $\tau(t) = \sqrt{-2\log t}$ for $10^{-7} < t < 1/2$. $\Phi^{-1}$ is computed using the \texttt{scipy.norm} module. Right: Illustrating the inequality~\eqref{eq:inequality_tau} with the plot of $\tau$ and $\tau-pol(\tau) = \tau -  \frac{c_0 + c_1 \tau + c_2 \tau^2}{1 + d_1 \tau + d_2 \tau^2 + d_3 \tau^3}$ for $\tau > \sqrt{\log 4}$. }
	\label{fig:tau_phi_t}
\end{figure}

 For the example considered in this section, we need to examine the behavior of $g$ and its mixed first-order derivatives when $1/2\leq t_j < 1$, $j \in 1:s$ to model the singularity. From the implicit function theorem, the derivative $\frac{\partial \Phi^{-1}(t)}{\partial t}$ is given by,
\begin{equation}
	\label{eq:derivative_phi_-1}
	\frac{\partial \Phi^{-1}(t)}{\partial t} = \sqrt{2\pi} \exp\left(\frac{1}{2}\left(\Phi^{-1}(t)\right)^2\right).
\end{equation}
Substitude the approximation~\eqref{eq:approx_phi_2} into~\eqref{eq:derivative_phi_-1}, we have
\begin{equation}
	\label{eq:derivative_phi_-1_upper_bound}
	\begin{split}
	\frac{\partial \Phi^{-1}(t)}{\partial t} &= \sqrt{2\pi} \exp\left(\frac{1}{2}\left( \tau - \frac{c_0 + c_1 \tau + c_2 \tau^2}{1 + d_1 \tau + d_2 \tau^2 + d_3 \tau^3} + \bar{\epsilon} \right)^2\right)\\
	&= \sqrt{2\pi} \exp\left(\frac{1}{2}\left( \left( \tau - \frac{c_0 + c_1 \tau + c_2 \tau^2}{1 + d_1 \tau + d_2 \tau^2 + d_3 \tau^3} \right)^2 + \bar{\epsilon}^2 + \right. \right.\\
	&\left. \left. 2 \bar{\epsilon} \left( \tau - \frac{c_0 + c_1 \tau + c_2 \tau^2}{1 + d_1 \tau + d_2 \tau^2 + d_3 \tau^3} \right) \right) \right)\\
	&\leq \sqrt{2\pi} \exp\left( \frac{\bar{\epsilon}^2}{2} \right) \exp\left( \frac{1}{2} \tau^2 + \abs{\bar{\epsilon}} \tau \right)\\
	&= \sqrt{2\pi} \exp\left( \frac{\bar{\epsilon}^2}{2} \right) \exp \left( -\log (1-t) + \frac{\abs{\bar{\epsilon}}}{2} \sqrt{-2 \log (1-t)} \right)\\
	&\leq \sqrt{2\pi} \exp\left( \frac{\bar{\epsilon}^2}{2} \right) \exp\left( -(1 + \abs{\bar{\epsilon}} ) \log (1-t) \right)\\
	&= \sqrt{2\pi} \exp\left( \frac{\bar{\epsilon}^2}{2} \right) (1-t)^{-(1 + \abs{\bar{\epsilon}})},
\end{split}
\end{equation}
where in the first inequality we apply the inequality~\eqref{eq:inequality_tau} and in the second inequality we use the fact that $\frac{1}{2} \sqrt{-2 \log (1-t)} \leq -\log(1-t)$ for $t \geq \frac{1}{2}$.
We also have
\begin{equation}
	\begin{split}
	\exp\left(\sum_{j=1}^s \sigma_j \Phi^{-1}(t_j)\right) &= \exp\left(\sum_{j=1}^s \sigma_j (\tau - \frac{c_0 + c_1 \tau + c_2 \tau^2}{1 + d_1 \tau + d_2 \tau^2 + d_3 \tau^3} + \bar{\epsilon}) \right)\\
	&\leq \exp\left(\sum_{j=1}^s \sigma_j (\tau  + \abs{\bar{\epsilon}}) \right)\\
	&= \exp\left( \abs{\bar{\epsilon}} \sum_{j=1}^s \sigma_j \right) \exp \left( \sum_{j=1}^s \sigma_j \sqrt{-2\log (1-t_j)} \right).
	\end{split}
\end{equation}
Using the above derivations, we can find an upper bound for the partial derivative of $g$ as
\begin{align}
	\label{eq:upper_bound_partial_derivative_g_ex1}
	\frac{\partial g}{\partial \mathbf{t}_\mathfrak{u}} &= \exp\left(\sum_{j = 1}^s \sigma_j \Phi^{-1}(t_j) \right) \cdot \prod_{j \in \mathfrak{u}} \sigma_j {\partial^j \Phi^{-1}({t}_j)} \nonumber\\
	&\leq   \exp \left( \sum_{j=1}^s \sigma_j \sqrt{-2\log (1-t_j)} \right) \exp\left( \abs{\bar{\epsilon}} \sum_{j=1}^s \sigma_j \right) \cdot \sqrt{2\pi} \exp\left( \frac{\bar{\epsilon}^2}{2} \right) \prod_{j \in \mathfrak{u}} \sigma_j (1-t_j)^{-(1 + \abs{\bar{\epsilon}})}. 
\end{align}
We mention that the approximations of $\Phi^{-1}$ and its derivative are also applied in~\cite{bartuska2023double} for the asymptotic analysis of a lognormal model. {To check the boundary growth condition~\eqref{eq:boundary_growth_condition}, let $\mathbf{v} = (v_1, \dotsc, v_s)$, we define: } 
\begin{align}
	{h(\mathbf{v}) := \exp\left(\sum_{j = 1}^{s} \sigma_j {\sqrt{-2\log  {v}_j} }\right)} = \exp\left(\sum_{j = 1}^{s} \sigma_j {\sqrt{-2\log (1 - {t}_j)} }\right)  
\end{align}
for the notation simplicity and study the local growth behavior of $h(\mathbf{v})$. 


%
%

\begin{lemma}[First-order Taylor approximation]
	For $\mathbf{v} \in (0, \frac{1}{2})^s$, the function $h(\mathbf{v})$ satisfies the following bound:
	\begin{align}
	{h(\mathbf{v})} \leq B(\mathbf{v}_{c}) \prod_{j=1}^s ({v}_j)^{-A_j(\mathbf{v}_{c})}, \quad \mathbf{v}_{c} \in \partial K_{n,s},
	\label{eq:inequality_local_rate_A}
	\end{align}
	where $B(\mathbf{v}_{c}) = \prod_{j=1}^s \exp\left( \sigma_j \sqrt{\frac{-\log (\mathbf{v}_{c}^j)}{2} } \right)$, $A_j(\mathbf{v}_{c}) = \frac{\sigma_j}{\sqrt{-2 \log \mathbf{v}_c^j}}$. 
	\label{lemma:first_order_taylor_approximation_local_rate_A}
\end{lemma}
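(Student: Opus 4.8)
The plan is to exploit the product structure of $h$ and reduce the claim to a one–dimensional inequality, which in turn follows from the concavity of the square–root function (equivalently, from the AM--GM inequality). There is no analytic difficulty hidden here; the content is entirely a coordinatewise tangent-line estimate.

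First I would fix notation. Since $\mathbf{v}_c \in \partial K_{n,s}$, each coordinate satisfies $\min(\mathbf{v}_c^j, 1-\mathbf{v}_c^j) > 0$, so $\mathbf{v}_c^j \in (0,1)$ and $a_j \coloneqq -\log \mathbf{v}_c^j > 0$; likewise $x_j \coloneqq -\log v_j > 0$ whenever $v_j \in (0,\tfrac12)$. Taking logarithms in~\eqref{eq:inequality_local_rate_A} and dividing by $\sigma_j > 0$, it suffices to prove, for each $j \in 1:s$,
\begin{equation*}
	\sqrt{2x_j} \;\le\; \sqrt{\tfrac{a_j}{2}} + \frac{x_j}{\sqrt{2a_j}} .
\end{equation*}

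Next I would establish this scalar bound. Multiplying both sides by $\sqrt{2a_j}>0$ turns it into $2\sqrt{x_j a_j} \le a_j + x_j$, which is AM--GM for the nonnegative numbers $x_j$ and $a_j$; equivalently it is the tangent-line bound $\sqrt{2x} \le \sqrt{2a} + \tfrac{1}{\sqrt{2a}}(x-a)$ for the concave map $x \mapsto \sqrt{2x}$ at $x=a_j$, after simplifying $\sqrt{2a_j} - \tfrac{a_j}{\sqrt{2a_j}} = \sqrt{a_j/2}$. Multiplying through by $\sigma_j$, exponentiating, and taking the product over $j=1,\dots,s$ yields
\begin{equation*}
	h(\mathbf{v}) = \prod_{j=1}^s \exp\!\big(\sigma_j\sqrt{2x_j}\big) \;\le\; \prod_{j=1}^s \exp\!\Big(\sigma_j\sqrt{\tfrac{a_j}{2}}\Big)\, (v_j)^{-\sigma_j/\sqrt{2a_j}},
\end{equation*}
and substituting back $a_j = -\log\mathbf{v}_c^j$ and $x_j = -\log v_j$ identifies the right-hand side with $B(\mathbf{v}_c)\prod_{j=1}^s (v_j)^{-A_j(\mathbf{v}_c)}$ for exactly the $B(\mathbf{v}_c)$ and $A_j(\mathbf{v}_c)$ in the statement, since $\exp(\sigma_j x_j/\sqrt{2a_j}) = (v_j)^{-\sigma_j/\sqrt{-2\log\mathbf{v}_c^j}}$.

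The only points requiring a modicum of care, rather than a genuine obstacle, are: (i) verifying that all logarithm arguments lie in $(0,1)$ so the square roots are real and strictly positive, which is guaranteed by $\mathbf{v}\in(0,\tfrac12)^s$ and $\mathbf{v}_c \in \partial K_{n,s}$; and (ii) emphasizing that the bound holds for the fixed anchor $\mathbf{v}_c$ but uniformly in $\mathbf{v}$, so that the freedom to choose $\mathbf{v}_c \in \partial K_{n,s}$ is preserved for the subsequent maximization~\eqref{eq:optimization_aj}. This concludes the proposed argument.
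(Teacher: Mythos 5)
Your proposal is correct and follows essentially the same route as the paper: after taking logarithms and substituting $x_j=-\log v_j$, $a_j=-\log \mathbf{v}_c^j$, the paper bounds $\sum_j \sigma_j\sqrt{2x_j}$ by its first-order Taylor expansion (tangent plane) at $\mathbf{x}=\mathbf{a}$ using concavity, which is exactly your coordinatewise tangent-line/AM--GM inequality $2\sqrt{x_j a_j}\le x_j+a_j$ summed over $j$. The identification of $A_j(\mathbf{v}_c)$ and $B(\mathbf{v}_c)$ then proceeds as in the paper, so no gap remains.
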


The proof is provided in the supplementary material~\ref{sec:proof_of_lemma_first_order_taylor_approximation_local_rate_A}. 
Let us now consider the optimization problem~\eqref{eq:optimization_aj}:
\begin{align}
	\begin{split}
		\max_{\mathbf{v}_{c} \in \partial K_{n,s}}&~ \log  B(\mathbf{v}_c) + \sum_{j=1}^s -A_j \log \mathbf{v}_c^j\\
		s.t.&~ -\sum_{j=1}^s \log \mathbf{v}_c^j = \log \delta^{-1},
	\end{split}
	\label{eq:optimization_aj_ex_1}
\end{align}
for $\delta = Cn^{-1}$. Following the standard Lagrangian approach (the details are refered to supplementary material~\ref{sec:lagrangian_to_optimization_problem_3.7}), the value of $A_j^*$ from~\eqref{eq:optimization_aj}, is given by
\begin{align}
	A_j^* = \sqrt{\frac{\sum_{j=1}^s \sigma_j^2}{2 \log \delta^{-1}}} = \sqrt{\frac{\sum_{j=1}^s \sigma_j^2}{2 (\log n - \log C) } }.
	\label{eq:Astar_exp}
\end{align}
and
\begin{equation}
	\label{eq:Bstar_exp}
	B(\mathbf{v}_c^{*}) = \exp\left( \sqrt{\frac{\sum_{j=1}^s \sigma_j^2}{2} \left( \log n - \log C \right) } \right),
\end{equation}
where $\mathbf{v}_c^{*}$ is the optimizer to the optimization problem~\eqref{eq:optimization_aj_ex_1}. 
Thus we have,
\begin{equation}
	\abs*{\frac{\partial g}{\partial \mathbf{t}_{1:s}}} \leq B_{n, s}^{*} \prod_{j=1}^s (1 - t_j)^{-(1 + \abs{\bar{\epsilon}}) + \max_j A_j^{*} }, 
\end{equation}
where $B_{n, s}^{*} = B(\mathbf{v}_c^{*}) K_1 $, $K_1 = \exp\left( \abs{\bar{\epsilon}} \sum_{j=1}^s \sigma_j \right) \sqrt{2\pi} \exp\left( \frac{\bar{\epsilon}^2}{2} \right) \prod_{j =1}^s \sigma_j$. With an abuse of notation, now we perturb the values of $A_j^{*}$, $j = 1, \dotsc, s$ to have
\begin{equation}
	A_j^{*} = (j-1) \bar{\delta} +   \sqrt{\frac{\sum_{j=1}^s \sigma_j^2}{2 (\log n - \log C) } },
\end{equation}
for $\bar{\delta}  > 0$ and $A_j^{*} < 1$. The expected integration error for the RQMC method satisfies
	\begin{align}
		\mathbb{E} [ \lvert I(g) - \hat{I}_n^r(g) \rvert ] \leq C_1(n) n^{-1 + \max_j A_j^*} + C_2(n) n^{-1 + \epsilon + \max_j A_j^*}
		\label{eq:QMC_convergence_model_ex1}
	\end{align}
where $\epsilon > 0$, and $C_1(n) = 2^{s+1} \tilde{B}_{n,s} \sum_{j=1}^s \frac{C^{1-A_j^{*}}}{(A_j^{*} - 1) \phi^{\prime}(A_j^{*}) } $ and \\ $C_2(n) = C_{\epsilon, s} B^{*}_{n, s} \left( \frac{5}{2} \right)^s \sum_{\substack{\mathfrak{u} \subseteq 1:s \\ \mathfrak{u} \neq \varnothing}} \frac{1}{A_{m(\mathfrak{u})}^{*}} \prod_{j \in \mathfrak{u}_{-}} \frac{1}{A_{m(\mathfrak{u})}^{*} -A_j^*} (C^{-1})^{\max_{j} A_{j}^{*} }$ follows the same definition as in Theorem~\ref{thm:nonasymptotic_QMC_error_estimate}. More specficially, we can find the upper bounds for $\tilde{B}_{n, s}$, $C_1(n)$ and $C_2(n)$ as
\begin{align}
	\tilde{B}_{n, s} &\leq 2^s {B}_{n, s}^{*} \prod_{j=1}^s (1 + \frac{1}{A_j^{*}}) \nonumber \\ 
	& \leq 2^{2s} B(\mathbf{v}_c^{*}) K_1 \sqrt{\frac{2 (\log n - \log C) }{\sum_{j=1}^s \sigma_j^2} } \nonumber \\ 
	&\leq 2^{2s} K_1 \exp\left( \sqrt{\frac{\sum_{j=1}^s \sigma_j^2}{2} \left( \log n - \log C \right) } \right)  \sqrt{\frac{2 (\log n - \log C) }{\sum_{j=1}^s \sigma_j^2} } \\
	C_1(n) &\leq 2^{s+1} \tilde{B}_{n, s} \  \bar{\delta}^s \frac{\sum_{j=1}^s C^{1 - A_j^{*}}}{\max_j A_j^{*} - 1} \nonumber  \\
	&\leq K_2 \exp\left( \sqrt{\frac{\sum_{j=1}^s \sigma_j^2}{2} \left( \log n - \log C \right) } \right)  \sqrt{\frac{2 (\log n - \log C) }{\sum_{j=1}^s \sigma_j^2} } \frac{\sum_{j=1}^s C^{1 - A_j^{*}}}{\max_j A_j^{*} - 1}\\
	C_2(n) &\leq C_{\epsilon, s} B^{*}_{n, s} \left( \frac{5}{2} \right)^s \sum_{\substack{\mathfrak{u} \subseteq 1:s \\ \mathfrak{u} \neq \varnothing}} \frac{1}{A_{m(\mathfrak{u})}^{*}} \prod_{j \in \mathfrak{u}_{-}} \frac{1}{A_{m(\mathfrak{u})}^{*} -A_j^*} (C^{-1})^{\max_{j} A_{j}^{*} } \nonumber\\
	&\leq C_{\epsilon, s} B^{*}_{n, s} \left( \frac{5}{2} \right)^s \sqrt{\frac{2 (\log n - \log C) }{\sum_{j=1}^s \sigma_j^2} } (C^{-1})^{\max_{j} A_{j}^{*} } \sum_{\substack{\mathfrak{u} \subseteq 1:s \\ \mathfrak{u} \neq \varnothing}} \bar{\delta}^{\abs{\mathfrak{u}_{-}}} \nonumber \\
	&\leq C_{\epsilon, s} K_1 \exp\left( \sqrt{\frac{\sum_{j=1}^s \sigma_j^2}{2} \left( \log n - \log C \right) } \right) \left( \frac{5}{2} \right)^s \sqrt{\frac{2 (\log n - \log C) }{\sum_{j=1}^s \sigma_j^2} } \nonumber \\
	& \hspace*{10em} \cdot (C^{-1})^{\max_{j} A_{j}^{*} } \sum_{k=1}^s {{s}\choose{k}} \bar{\delta}^{k-1} \nonumber \\
	&\leq K_3  \exp\left( \sqrt{\frac{\sum_{j=1}^s \sigma_j^2}{2} \left( \log n - \log C \right) } \right)  \sqrt{\frac{2 (\log n - \log C) }{\sum_{j=1}^s \sigma_j^2} },
\end{align}
where $K_2 = 2^{3s+1} K_1 \bar{\delta}^s, K_3 = C_{\epsilon, s} K_1 \left( \frac{5}{2} \right)^s (C^{-1})^{\max_{j} A_{j}^{*} }  s! \frac{1 - \bar{\delta}^{k}}{1-\bar{\delta}}$.
Similar to Example~\ref{example:2}, one can show that $C_1(n), C_2(n) = o(n^{\alpha})$ for $\alpha > 0$. For a finite sample size $n$, considering the error estimate~\eqref{eq:QMC_convergence_model} in Theorem~\ref{thm:nonasymptotic_QMC_error_estimate}, the exponent on $n$ decreases when $n$ increases, or the variance $\sigma_j^2$ decreases for each dimension $j$. 


\subsection{Elliptic partial differential equations with lognormal coefficients}
\label{subsec:qmc4pde}
We consider an elliptic PDE on a Lipschitz domain $\mathcal{D} \subset \mathbb{R}^d$, in the following form:
	\begin{subequations}
	\label{eq:bvp_general}
	\begin{align}
	\label{eq:pde_general}
	-\nabla_{\mathbf x} \cdot \left( a(\mathbf x; \omega) \nabla_{\mathbf x} u(\mathbf x; \omega) \right) 
	&= f(\mathbf x; \omega) &&\text{for $\mathbf x  \in \mathcal D$,} 
	\\
	u(\mathbf x; \omega) &=  0 &&\text{for $\mathbf x \in \partial\mathcal{D}$},
	\end{align}
\end{subequations}
with almost all $\omega \in \Omega$, where $\Omega$ belongs to a complete probability space tuple $(\Omega, \mathcal{F}, \mathbb{P})$. The differential operators $\nabla \cdot$ and $\nabla$ are taken w.r.t. the spatial variable $\mathbf x$. We {explore the} equations in~\eqref{eq:bvp_general} in the weak form. For a suitable functional space $V$ (e.g., $V = H^1_0(\mathcal{D})$), we seek $u(\mathbf{x}; \omega) \in V$ such that
\begin{equation}
	\begin{split}
	\left \langle a(\mathbf x; \omega) \nabla_{\mathbf x} u(\mathbf x; \omega), \nabla_{\mathbf x} v(\mathbf x; \omega) \right\rangle & = \left \langle f(\mathbf x; \omega), v(\mathbf x; \omega) \right\rangle \quad\\
	&  \text{for all } v \in V \text{ and almost all } \omega \in \Omega,
	\end{split}
\end{equation}
where $\left\langle \cdot, \cdot \right \rangle$ denotes the inner product. The QoI $Q(\omega)$ is given in the following general form:
\begin{align}
	Q(\mathbf{y}(\omega)) = \mathcal{G}(u(\cdot ;\omega)),
\end{align}
where $\mathcal{G} \in V^\prime$, $V^\prime$ denotes the dual space of $V$. We are interested in computing $\mathbb{E}[Q]$. The integrand in~\eqref{eq:integration_problem} is given by $g = Q \circ {\Phi}^{-1}$. 

We consider $a(\mathbf{x};\omega)$, which takes the following form:
\begin{align}
a(\mathbf{x};\omega) = \exp\left(\sum_{j = 1}^s {y}_j { (\omega)} \psi_j { (\mathbf{x})}\right),
\end{align}
with $s \in \mathbb{N}^+ $ and $y_j$ are i.i.d. samples from $\mathcal{N}(0,1)$, $j = 1,\dotsc, s$. 
Moreover, we define $b_j = \lVert \psi_j \rVert_{L^{\infty}(\mathcal{D})} < +\infty$, for $j = 1, \dotsc, s$.


Similar to the analysis in Section~\ref{subsec:lognormal}, we can determine $A_j^*$~\eqref{eq:a_star} as follows:
\begin{align}
A_j^* = \sqrt{\frac{\sum_{j=1}^s b_j^2}{2 (\log n - \log C) } },
\label{eq:Astar_pde}
\end{align}
and $\max_j A_j^* = \sqrt{\frac{\sum_{j=1}^s b_j^2}{2  (\log n - \log C) }}$. The details are provided in the supplementary material~\ref{sec:derivation_of_astar_pde}. The rest of the RQMC error analysis for this example is identical to the one in Section~\ref{subsec:lognormal}, except that the coefficients $b_j$ are used instead of $\sigma_j$. 

{In this section, we have considered two concrete examples with unbounded variation and applied the theory developed in Section~\ref{sec:non_asymptotic_convergence_rate_QMC} to analyze their convergence rates.} Consistent with the theory, the convergence rate can be improved by increasing the sample size $n$. However, whether we can further enhance the convergence rate by modifying the sample distribution and placing more points near the singular corners is unclear. In the next section, we discuss the effects of importance sampling.

\section{Importance sampling}
\label{sec:importance_sampling}

This section provides two kinds of IS proposal distributions and analyzes their effects on the convergence rate. We will study the asymptotic behavior of the QMC with IS integrand and demonstrate the improved asymptotic convergence rate.

\subsection{First  proposal distribution}
\label{subsec:first_integral}

This section proposes a Gaussian distribution with scaled variance to distribute more samples closer to the corners. This kind of IS was studied in~\cite{Kritzer2020efficient} for multivariate functions that belong to certain Sobolev spaces, where the worst-case integration error was analyzed and optimized. 

%
Let $g \in \{\exp \circ (\boldsymbol{\sigma}^T \Phi^{-1}), Q \circ \Phi^{-1} \} : [0, 1]^s \to \mathbb{R}$, which are the two integrands considered in Section~\ref{sec:integrand_infty}. We introduce the component-wise multiplication notation $\odot$, such that $\bm{\alpha} \odot \mathbf{y} = \{\alpha_1\mathrm{y}_1, \alpha_2\mathrm{y}_2, \dotsc, a_s \mathrm{y}_s\} = \mathrm{diag}(\bm{\alpha}) \mathbf{y}$ when $\boldsymbol{\alpha}, \mathbf{y} \in \mathbb{R}^s$. The integral of $g$ over $[0, 1]^s$ is
\begin{equation}
\begin{split}
I(g) &= \int_{[0,1]^s} g(\mathbf{t}) d\mathbf{t}\\
&= \int_{\mathbb{R}^s} \nu (\mathbf{y}) \rho(\mathbf{y}) d\mathbf{y}\\
&= \prod_{j=1}^s {\alpha_j} \int_{\mathbb{R}^s}  \nu (\bm{\alpha} \odot \mathbf{y}) \rho({\bm{\alpha}} \odot \mathbf{y}) d\mathbf{y}\\
&= \prod_{j=1}^s {\alpha_j} \int_{[0, 1]^s} \nu (\bm{\alpha}\odot \Phi^{-1}(\mathbf{t})) \cdot \frac{\rho(\bm{\alpha} \odot \Phi^{-1}(\mathbf{t}))}{\rho(\Phi^{-1}(\mathbf{t}))}d\mathbf{t},
\end{split}	
\end{equation}
where $\nu = g \circ \Phi: \mathbb{R}^s \to \mathbb{R}$, $\bm{\alpha} \in \mathbb{R}^s_{+}$, and $\rho$ is the $s$-dimensional standard normal distribution density. The integrand with IS $g_{\textrm{IS}}$ is given by
\begin{align}
\begin{split}
g_{\textrm{IS}}(\mathbf{t}) &= \left(\prod_{j=1}^s {\alpha_j} \right) \nu (\bm{\alpha} \odot \Phi^{-1}(\mathbf{t})) \cdot \frac{\rho(\bm{\alpha} \odot \Phi^{-1}(\mathbf{t}))}{\rho(\Phi^{-1}(\mathbf{t}))}\\
&= \left(\prod_{j=1}^s {\alpha_j} \right) \nu (\bar{\mathbf{y}}) \cdot  \prod_{j=1}^s  \exp\left( -\frac{\mathrm{y}_j^2}{2} ({\alpha}_j^2 - 1) \right),
\end{split}
\end{align}
where we define $\mathbf{y} = \Phi^{-1}(\boldsymbol{t})$ and $\bar{\mathbf{y}} = \bm{\alpha} \odot \mathbf{y} = \bm{\alpha} \odot \Phi^{-1}(\boldsymbol{t})$. The function $g_{\textrm{IS}}$ no longer has the singularity at the boundaries for ${\alpha}_j > 1$, $j = 1, 2, \dotsc, s$. The detailed study of the derivative ${\partial^{\mathfrak{u}}}g_{\mathrm{IS}}$ is provided in the supplementary material~\ref{sec:derivation_of_partial_g_is}.
%
We now switch to the analysis of the derivative in Example 2, as we will find later that the conclusions will apply to Example 1 as well. 


In Example 2, $\nu = Q$, we apply the analysis in~\ref{sec:derivation_of_partial_g_is_2} and obtained the approximate upper bound of the derivative ${\partial^{\mathfrak{u}}}g_{\mathrm{IS}}$:
\begin{align}
	\mathcal{O} \left(\prod_{j=1}^s t_j^{-\mathbbm{1}_{j \in \mathfrak{u}} + \alpha_j^2 - 1 - \delta }\right)  \quad  t_j \to 0, \ j = 1, \dotsc, s, \delta > 0.
	\label{eq:partial_g_A}
\end{align}
From~\cite{Owen06}, we obtain the RQMC convergence rate $\mathcal{O}(n^{-1+\epsilon - \min_{j=1}^s \alpha_j^2 + 1 + \delta})$. Choosing $\alpha_j > 1$ will lead to an improved convergence rate. In the following we provide one approach to find one $\bm{\alpha}^*$ that minimizes the variance of the integrand $g_{\mathrm{IS}}$. 

We aim to determine $\mathbf{\bm{\alpha}}^*$ by minimizing the variance of $g_{\mathrm{IS}}$:
\begin{align}
\textrm{Var}(g_{\mathrm{IS}}) = \mathbb{E}[g_{\mathrm{IS}}^2] - (\mathbb{E}[g_{\mathrm{IS}}])^2,
\end{align}
which is equivalent to finding the minimizer for the second-order moment of $g_{\mathrm{IS}}$, as $\mathbb{E}[g_{\mathrm{IS}}] = \mathbb{E}[g]$: 
\begin{align}
\begin{split}
\bm{\alpha}^* &= \argmin_{\bm{\alpha} > \bm{1}} \mathbb{E}[ g_{\mathrm{IS}} ^2(\mathbf{t})]\\
&= \argmin_{\bm{\alpha} > \bm{1}} \int_{[0, 1]^s} \left(\prod_{j=1}^s {\alpha_j} \right)^2 \nu^2(\bm{\alpha} \odot \Phi^{-1}(\mathbf{t})) \cdot \frac{\rho^2(\bm{\alpha} \odot \Phi^{-1}(\mathbf{t}))}{\rho^2(\Phi^{-1}(\mathbf{t}))} d \mathbf{t}\\
&= \argmin_{\bm{\alpha} > \bm{1}} {\int_{\mathbb{R}^s} \left(\prod_{j=1}^s {\alpha_j} \right)^2 \nu^2(\bm{\alpha} \odot \mathbf{y}) \cdot \frac{\rho^2(\bm{\alpha} \odot \mathbf{y})}{\rho^2(\mathbf{y})} \rho(\mathbf{y}) d \mathbf{y}}\\
&= \argmin_{\bm{\alpha} > \bm{1}} {\int_{\mathbb{R}^s} \left(\prod_{j=1}^s {\alpha_j} \right) \nu^2( \mathbf{y}) \cdot \frac{\rho^2( \mathbf{y})}{\rho^2(\bm{\frac{1}{\alpha}} \odot \mathbf{y})} \rho(\bm{\frac{1}{\alpha}} \odot \mathbf{y}) d \mathbf{y}}\\
&= \argmin_{\bm{\alpha} > \bm{1}} {\int_{\mathbb{R}^s} \left(\prod_{j=1}^s {\alpha_j} \right) \nu^2( \mathbf{y}) \cdot \frac{\rho( \mathbf{y})}{\rho(\bm{\frac{1}{\alpha}} \odot \mathbf{y})} \rho( \mathbf{y}) d \mathbf{y} }.
\end{split}
\end{align}
When the analytic solution is unavailable, we aim to find an approximate optimizer $\bar{\bm{\alpha}}$ using $n$ QMC samples. Specifically, we minimize the following objective function
\begin{align}
\bar{\bm{\alpha}} = \argmin_{\bm{\alpha} > \bm{1}}  \left(\prod_{j=1}^s {\alpha_j} \right) \frac{1}{n} \sum_{i=1}^n Q^2( \mathbf{y}_i) \cdot \frac{\rho( \mathbf{y}_i)}{\rho(\bm{\frac{1}{\alpha}} \odot \mathbf{y}_i)}.
\end{align}
where $\mathbf{y}_i = \Phi^{-1}(\mathbf{t}_i)$, with $\mathbf{t}_i$ the i-th QMC point. In practice, the number $n$ in the pilot run does not exceed the number of simulations samples. 

\subsection{Second proposal distribution}
\label{subsec:second_integral}
This section considers an IS proposal distribution on $[0, 1]^s$. We will only analyze Example 2 for brevity, as the case for Example 1 can be derived similarly. Inspired by the beta distribution, we propose the following distribution $\rho_{\beta}: [0, 1] \to \mathbb{R}_{+}$:
\begin{align}
\rho_{\beta}({t} ) = 
\begin{cases}
C t^{\beta - 1} & \text{$0 \leq t < \frac{1}{2}$}\\
C (1 - t)^{\beta - 1} & \text{$\frac{1}{2} \leq t \leq 1$},
\end{cases} 
\end{align}
with the constant $C = \beta \cdot (\frac{1}{2})^{1-\beta}$. The reason to propose such a distribution is the ease of computing the CDF $\Phi_{{\beta}} : [0, 1] \to [0, 1]$ and its inverse $\Phi_{{\beta}}^{-1}: [0, 1]\to [0, 1]$:
\begin{align}
\Phi_{{\beta}}({t} ) = 
\begin{cases}
\frac{C}{\beta} t^{\beta} & \text{$0 \leq t < \frac{1}{2}$}\\
1 - \Phi_{{\beta}}(1 - {t} ) & \text{$\frac{1}{2} \leq t \leq 1$},
\end{cases} 
\end{align}
\begin{align}
\Phi^{-1}_{\beta} (w) = (2w)^{\frac{1}{\beta}} \cdot \frac{1}{2} \quad \textrm{where } 0\leq w < 1/2.
\end{align}
The distribution extends to multidimension by tensor products of the one-dimensional. Next, we apply IS to compute the integral $I(g)$. Using the density $\rho_{\bm{\beta}}$, we can write
\begin{align}
\begin{split}
I(g) &=\int_{[0, 1]^s} g( \mathbf{t} ) d\mathbf{t}\\
&=\int_{[0, 1]^s} g( \mathbf{t}) \cdot \frac{\rho_{\bm{\beta}}(\mathbf{t})}{\rho_{\bm{\beta}}(\mathbf{t})} d\mathbf{t}\\
&=\int_{[0, 1]^s} g( \Phi_{\bm{\beta}}^{-1} (\mathbf{w})) \cdot \frac{1}{\rho_{\bm{\beta}}(\Phi_{\bm{\beta}}^{-1} (\mathbf{w}))} d\mathbf{w}.
\end{split}
\end{align}
In Example 2, $g = Q\circ \Phi^{-1}$, and we introduce
\begin{align}
g_{\mathrm{IS}} (\mathbf{w}) = Q\left(\Phi^{-1} (\Phi_{\bm{\beta}}^{-1} (\mathbf{w}))\right) \cdot \frac{1}{\rho_{\bm{\beta}}(\Phi_{\bm{\beta}}^{-1} (\mathbf{w}))}.
\end{align}
We obtained the approximate upper bound of the derivative ${\partial^{\mathfrak{u}}}g_{\mathrm{IS}}$:
\begin{equation}
\mathcal{O} \left( \prod_{j=1}^s w_j^{-\mathbbm{1}_{j \in \mathfrak{u}} + \frac{1}{\beta_j} - 1 - \delta} \right) \quad  w_j \to 0, \ j = 1, \dotsc, s, \delta > 0.
\end{equation}
From~\cite{Owen06}, we obtain the RQMC convergence rate $\mathcal{O}(n^{-1+\epsilon - \min_{j=1}^s \frac{1}{\beta_j} + 1 + \delta})$. Choosing $\beta_j < 1$ will lead to an improved convergence rate.
The details of the derivations of $\partial^{\mathfrak{u}} g_{\mathrm{IS}}$ and are provided in the supplementary material~\ref{sec:derivation_of_partial_g_is_beta}. 
Similar to the last section, a possible choice of the parameter $\bm{\beta}$ can be determined by minimizing the second-order moment of $g_{\mathrm{IS}}$:
\begin{align}
\begin{split}
\beta^* &= \argmin_{\bm{\beta} < 1} \mathbb{E}[ g_{\mathrm{IS}}^2(\mathbf{w})]\\
&= \argmin_{\bm{\beta} < 1} \int_{[0, 1]^s}  \left(\frac{g(\Phi^{-1} (\mathbf{t}))}{\rho_{\bm{\beta}}(\mathbf{w})} \right)^2 \cdot \rho_{\bm{\beta}}(\mathbf{w}) d \mathbf{w}\\
&= \argmin_{\bm{\beta} < 1} \int_{[0, 1]^s} \frac{g^2(\Phi^{-1} (\mathbf{w}))}{\rho_{\bm{\beta}}(\mathbf{w})}  d \mathbf{w}.
\end{split}
\end{align}
Again, we seek an optimizer based on ensembles when the analytical solution is unavailable.

\section{Numerical results}
\label{sec:numerical}
This section tests the convergence rates of the two examples we have considered, compares them with the analysis, and demonstrates how IS improves the rates. 

\subsection{Lognormal random variable expectation}
We approximate the expectation of a lognormal random variable, that is, 
\begin{equation*}
	\E{g} = \E{\exp\left(\sum_{j=1}^s \sigma_j \xi_j \right)},
\end{equation*}
where $\xi_j$ are i.i.d. $\mathcal{N}(0, 1)$. The expectation is $\prod_{j=1}^{s} \exp\left(\frac{\sigma_j^2}{2}\right)$. We test the convergence of the root mean squared error (RMSE) of the RQMC method using the Sobol' sequence. The RMSE for QMC is computed as follows:
\begin{align}
	\label{eq:rmse_qmc}
	\sqrt{\mathbb{E} \left[ \left(\frac{1}{R} \sum_{r=1}^{R} \frac{1}{n} \sum_{i=1}^n g(t_i \oplus \Delta_r) - \mathbb{E}[g]\right)^2\right]} 
	\approx \sqrt{\frac{1}{R(R-1)} \sum_{r=1}^R \left( \hat{I}_n^{(r)} - \hat{I}_{n}^R \right)^2}
\end{align}
where $\hat{I}_n^{(r)} = \frac{1}{n}  \sum_{i=1}^n g(t_i \oplus \Delta_r) $ and $\hat{I}_{n}^R =  \frac{1}{R} \sum_{r=1}^{R} \hat{I}_n^{(r)}$. 

With the nested uniform scrambling (Owen's scrambling) employed as the randomization, the convergence rate for the RMSE is $\mathcal{O}(n^{-3/2+\epsilon})$ if the integrand has Lipschitz-continuous mixed first-order derivatives~\cite{owen1998scrambling}. A recent study~\cite{liu2024randomized} reveals that the Lipschitz condition can be relaxed for an almost $\mathcal{O}(n^{-3/2+\epsilon})$ rate. However, the integrand $g$ has a singularity at 0 and will not have the above mentioned convergence rate. In both examples presented in this work, we opt for the random linear scramble~\cite{matouvsek1998thel2, hong2003algorithm} as the randomization. This randomization incurs lower computational cost compared to Owen's scrambling while still yielding the same variance for the RQMC estimator. (For instance, refer to Definition 6 of~\cite{owen2008local} or Section 6.12 of~\cite{DKS2013}).

Figures~\ref{fig:example_1_s1}, \ref{fig:example_1_s3}, and \ref{fig:example_1_s6} plot the convergence of the RMSE for the original integrand $g$ and the integrand with importance sampling, $g_{\textrm{IS}}$. Each data point in these figures is RQMC estimator with the number of randomizations $R = 30$. Each boxplot consists of 10,000 samples. As can be observed, the IS significantly reduces the RMSE and improves the convergence rate. 

Tables~\ref{tab:convergence_rate_table_s1}, \ref{tab:convergence_rate_table_s3}, and \ref{tab:convergence_rate_table_s6} provide the measured convergence rates $\gamma$ and the associated values $(1-\gamma) / \sqrt{\sum_{j=1}^s \sigma_j^2}$. In these numerical results, we estimate the convergence rates for Example 1 under various settings by conducting a linear regression on data ranging from $n = 2^{20}$ to $n = 2^{22}$ on a log-log scale. The slope of this regression line will serve as our estimate for the convergence rate at $n = 2^{21}$. For the same sample size $n$, the convergence rate $\gamma$ decreases as $\sqrt{\sum_{j=1}^s \sigma_j^2}$ increases. The value $(1-\gamma) / \sqrt{\sum_{j=1}^s \sigma_j^2}$ exhibits similarity in magnitude for all cases in each dimension setting and increases as the dimension increases. 

\begin{table}[h!]
	\centering
	\caption{Convergence analysis of Example 1 for dimension $s=1$ and standard deviation $\sigma = \{1.0, 2.0, 3.0\}$. The convergence rate is fitted at $n = 2^{21}$, using a linear regression based on the data from $n = 2^{20}$ to $n = 2^{22}$. } \label{tab:convergence_rate_table_s1}
	\begin{tabular}{|c|c|c|c|}
	\hline
	 $\sigma$ & Conv. Rate $\gamma$ & Conv. Rate with IS & $(1 - \gamma) / \sigma $ \\ \hline
	1.0	& 0.875 & 1.888 & 0.125  \\ \hline
	2.0	& 0.719 & 1.340 & 0.141  \\ \hline
	3.0	& 0.582 & 1.088 & 0.139  \\ \hline
	\end{tabular}
\end{table}

\begin{figure}[htbp]
	\centering
	\includegraphics[width = 0.48\textwidth]{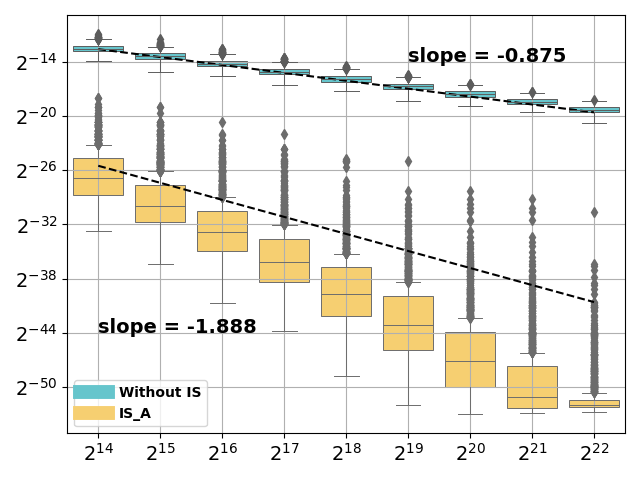}
	\hfill
	\includegraphics[width = 0.48\textwidth]{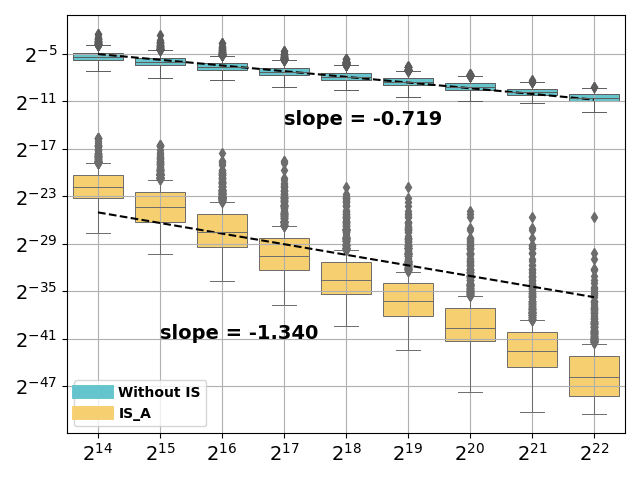}
	\\
	\includegraphics[width = 0.48\textwidth]{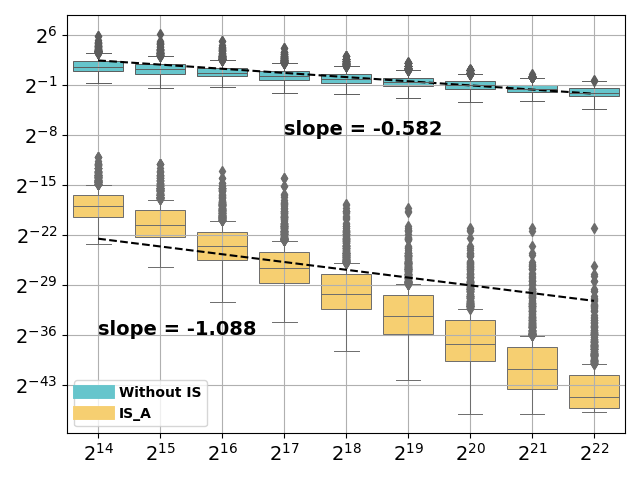}
	\caption{Example 1, the RMSE with $s=1$: $\sigma = 1.0$ (top left), 2.0 (top right), and 3.0 (bottom). Within each box, the grey horizental line marks the median value of a total 10,000 samples. Each box extends from first quartile to the third quartile of each group and each whisker marks the 0 and 99.9 percentile. The grey diamonds denote the outliers. The boxplot settings apply to the following boxplot figures. The convergence rate is fitted at $n = 2^{21}$. }
	\label{fig:example_1_s1}
\end{figure}


\begin{figure}[htbp]
	\centering
	\includegraphics[width = 0.48\textwidth]{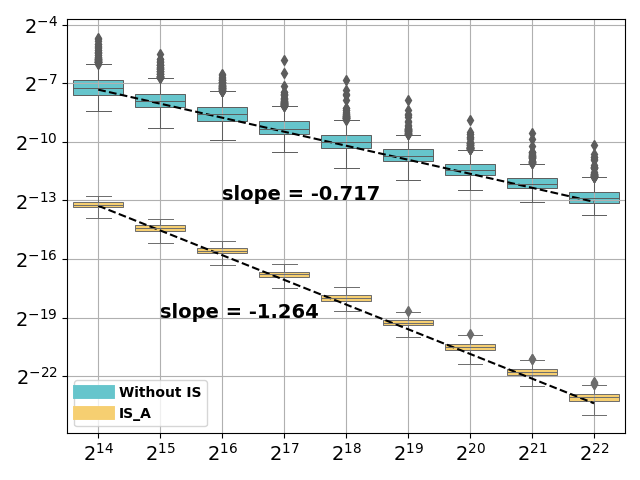}
	\hfill
	\includegraphics[width = 0.48\textwidth]{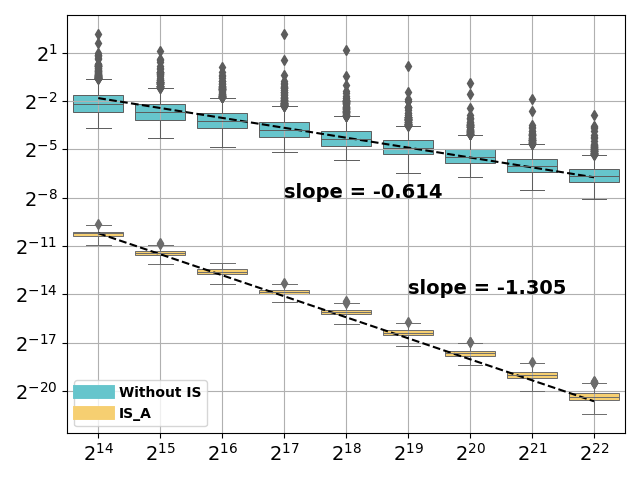}
	\\
	\includegraphics[width = 0.48\textwidth]{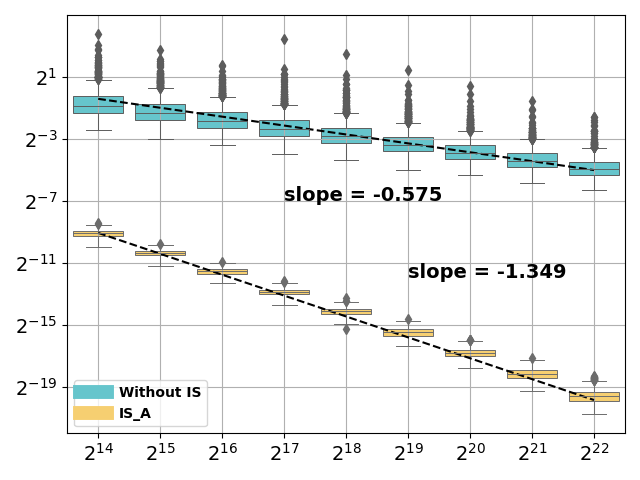}
	\hfill
	\includegraphics[width = 0.48\textwidth]{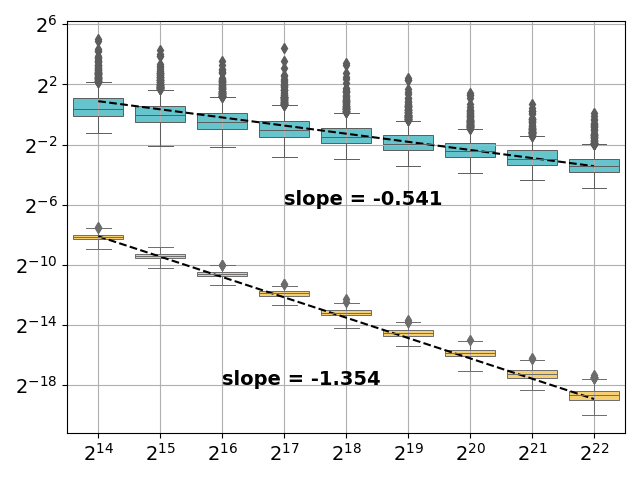}

	\caption{Example 1, the RMSE with $s=3$: $\bm{\sigma} = (1.0, 1.0, 1.0)$ (top left), $(2.0, 1.0, 1.0)$ (top right), $(2.0, 1.4, 1.0)$ (bottom left), and $(1.0, 1.7, 1.0)$ (bottom right). The convergence rate is fitted centered at $n = 2^{21}$.  }

	\label{fig:example_1_s3}
\end{figure}

\begin{figure}[htbp]
	\centering
	\includegraphics[width = 0.48\textwidth]{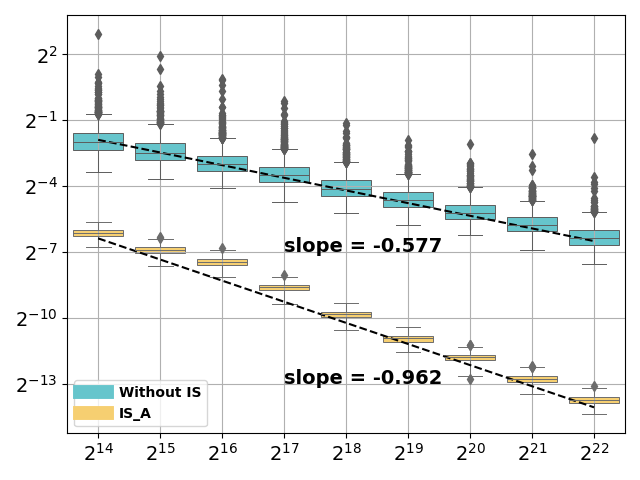}
	\hfill
	\includegraphics[width = 0.48\textwidth]{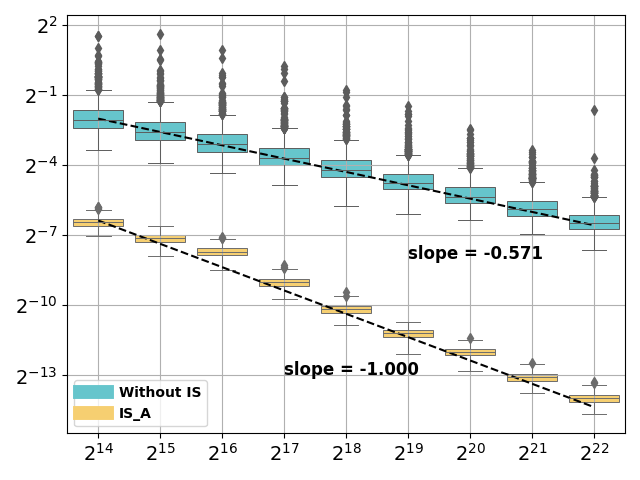}
	\\
	\includegraphics[width = 0.48\textwidth]{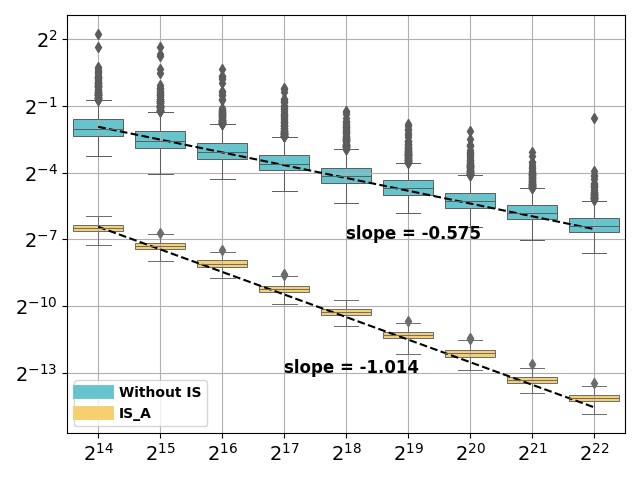}
	\hfill
	\includegraphics[width = 0.48\textwidth]{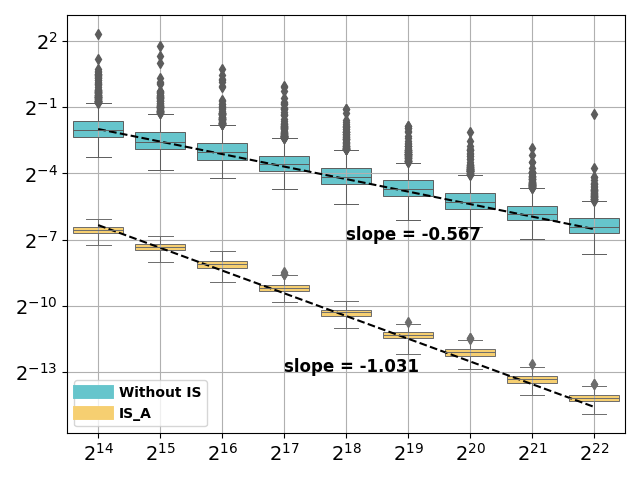}
	\caption{Example 1, the RMSE with $s=6$: Case I (top left), Case II (top right), Case III (bottom left), and Case IV (bottom right). The convergence rate is fitted at $n = 2^{21}$.}
	\label{fig:example_1_s6}
\end{figure}

\begin{table}[h!]
	\centering
	\caption{Convergence of Example 1, for dimension $s=3$ and various standard deviation settings, where the convergence rate is fitted at $n = 2^{21}$.  \label{tab:convergence_rate_table_s3}}
	\begin{tabular}{|c|c|c|c|}
		\hline
		$\bm{\sigma}$ & Conv. Rate $\gamma$ & Conv. Rate with IS & $(1 - \gamma) / \sqrt{\sum_{j=1}^s \sigma_j^2}$ \\ \hline
		(1.0, 1.0, 1.0)	& 0.717 & 1.264 & 0.163 \\ \hline
		(2.0, 1.0, 1.0)	& 0.614 & 1.305 & 0.158 \\ \hline
		(2.0, 1.4, 1.0)	& 0.575 & 1.349 & 0.161 \\ \hline
		(2.0, 1.7, 1.0)	& 0.541 & 1.354 & 0.162 \\ \hline
	\end{tabular}
\end{table}

\begin{table}[h!]
	\centering
	\caption{Convergence of Example 1 for dimension $s=6$ and various standard deviation $\bm{\sigma}$ settings, where ${\sigma}_j = {1}$, $j = 1, \dotsc, s$ for Case I, and $\sigma_j = \frac{\sqrt{6} \xi_j }{\sqrt{\sum_{j=1}^s \xi_j^2}}$, where $\xi_j \sim \textrm{Lognormal}(0, 1)$, $j = 1, \dotsc, s$ for Cases II, III, and IV. The convergence rate is fitted at $n = 2^{21}$.  \label{tab:convergence_rate_table_s6}}
	\begin{tabular}{|c|c|c|c|}
		\hline
		$\bm{\sigma}$ cases & Conv. Rate $\gamma$ & Conv. Rate with IS & $(1 - \gamma) / \sqrt{\sum_{j=1}^s \sigma_j^2}$ \\ \hline
		I	& 0.577 & 0.962 & 0.173 \\ \hline
		II	& 0.571 & 1.000 & 0.175 \\ \hline
		III	& 0.575 & 1.014 & 0.174 \\ \hline
		IV	& 0.567 & 1.031 & 0.178 \\ \hline
	\end{tabular}
\end{table}


\newpage
\subsection{Elliptic partial differential equations with lognormal coefficient}
We first specify the settings of this example. The QoI is the weighted integration of the solution $u$ over the entire domain $\mathcal{D}$, that is,
\begin{align*}
	Q(u) = \int_{\mathcal{D}} g(\mathbf{x}) u(\mathbf{x};\cdot) d\mathbf{x},
\end{align*}
where $\mathcal{D} = [-1, 1]^2$, $g = \rho(;\bm{\mu}, \bm{\Sigma}) * \mathbbm{1}_{D_0}$, $*$ denotes the convolution operator, $\mathbbm{1}$ is the indicator function, $\mathcal{D}_0 = \left[0.25, 0.5\right] \times \left[-0.5, -0.25\right]\subset \mathcal{D}$, and $\rho(;\bm{\mu}, \bm{\Sigma})$ represents the Gaussian density function in $\mathbb{R}^2$ with mean $\bm{\mu} = 0$ and covariance $\bm{\Sigma} = \frac{1}{16} \bm{I}$. The above settings is derived from~\cite{beck2022goal}. The PDE problem~\eqref{eq:pde_general} is solved with \texttt{DEAL.II}~\cite{dealII92}, using the $\mathcal{Q}_1$ finite element on a 16$\times$16 mesh. 

Recall that the coefficient $a$ involved in the PDE model~\eqref{eq:bvp_general} is given by,
\begin{align*}
	a(\mathbf{x};\omega) = \exp\left(\sum_{j=1}^s y_j \psi_j\right). 
\end{align*}
where the basis $\{\psi_j\}$ are the eigenfunctions to the following Mat\'ern covariance kernel: 
\begin{align}
	C(h) = \frac{1}{2^{\nu-1} \Gamma({\nu})} \left( \sqrt{2\nu} \frac{h}{r} \right)^{\nu} K_{\nu} \left( \sqrt{2\nu} \frac{h}{r} \right),
\end{align}
where we chose $\nu = 4.5$ and $r = 1$. In addition, $\Gamma$ is the Gamma function, and $K_{\nu}$ is the modified Bessel function of the second kind. Let $\lambda_{j, p}$ and $\theta_{j, p}$ be the Fourier coefficients and trigonometric functions in the Fourier expansion of the covariance kernel $C$ on an extended domain $\mathcal{D}_p = [-\gamma, \gamma]^d$, respectively, with $d = 2$. The extension is needed to ensure the positivity of the Fourier coefficients $\lambda_{j, p}$~\cite{bachmayr2018representations}. We let
\begin{align}
	\begin{split}
		\psi_{j, p} &= \lambda_{j,p} \theta_{j, p} \\
		\psi_j &= \left.\psi_{j, p} \right|_{\mathcal{D}}.
	\end{split}
\end{align}

Apart from the trigonometric-basis expansions, we also apply Meyer wavelet functions to construct the basis $\psi_j$ (for a detailed analysis and instructions, see Section~4 in~\cite{bachmayr2018representations}). 

To validate the proposed convergence model~\eqref{eq:Astar_pde}, we also considered the following coefficient:
\begin{align}
	a(\mathbf{x};\omega) = \exp\left(\sum_{j=1}^s y_j \sigma_j \psi_j\right),
\end{align}
where we generated six samples of $\bm{\sigma}$ within each dimension setting. In the first sample, $\bm{\sigma} = \mathbf{1}$. In the second and third samples, $\sigma_j$ are i.i.d. sampled from the uniform distribution $U[1,2]$. In the last three samples, the $\bm{\sigma}$ values of the first three cases are multiplied component-wise by 2. Figure~\ref{fig:example_2_scatter} plots the deteriorated rate $1 - \gamma$ against $\sqrt{\sum_{j=1}^s \sigma_j^2 b_j^2}$. The convergence rate is estimated based on a linear regression between the data from $n=2^6$ to $n=2^{15}$ on a log-log scale, with one RQMC estimator of $R=30$ randomizations. In all considered cases, the rate $1 - \gamma$ exhibits an almost linear dependence on $\sqrt{\sum_{j=1}^s \sigma_j^2 b_j^2}$. Unlike Example 1, the effect of dimensions on the convergence rates is not obvious in both cases, which was unexpected because the QMC points quality deteriorates in higher dimensions. One possible reason for this observation lies in Equation~\eqref{eq:partial_Q_inequality}, where we applied the upper bounds for $Q$ and its derivatives, which conceals the effective dimension. 

\begin{figure}[htbp]
	\centering
	\includegraphics[width = 0.48\textwidth]{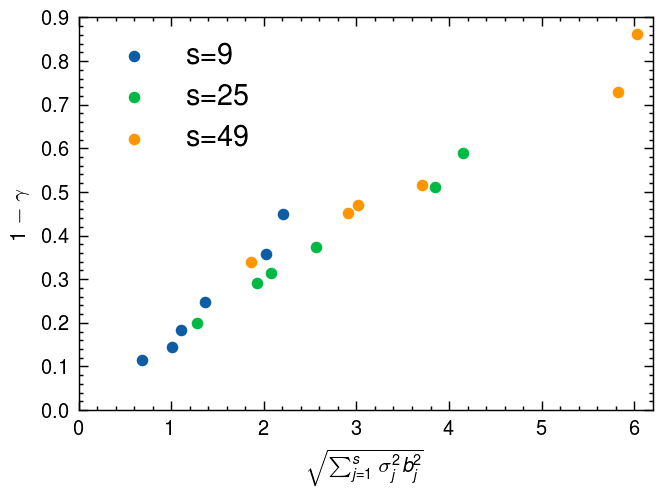}
	\hfill
	\includegraphics[width = 0.48\textwidth]{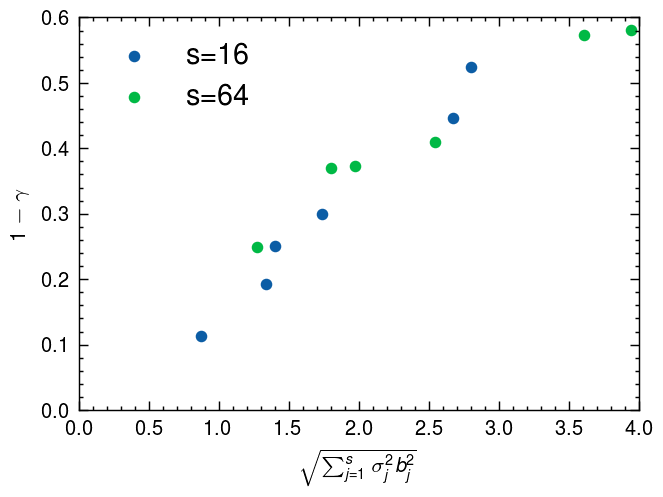}
	\caption{Example 2, the value $(1-\gamma)$ against $\sqrt{\sum_{j=1}^s \sigma_j^2 b_j^2}$ for using the Fourier basis (left) and wavelet-type basis (right) across various dimension and variance settings. The colors represent the dimension $s$, whereas each scatter point with the same color represents a unique variance setting. The dimension-independent effect is observed in the convergence rate in both cases. 
}
\label{fig:example_2_scatter}
\end{figure}

Figure~\ref{fig:fourierwaveletlinfty} compares the $L^\infty$ norm of the basis $ {\psi_{j, p}}$ and $ {\psi_{j}}$ between trigonometric and wavelet-type bases. 
The trigonometric basis is optimal in approximating the random field in the $L^2$ sense. However, the basis $L^{\infty}$ norm, which appears in the convergence model~\eqref{eq:Astar_pde} and \eqref{eq:QMC_convergence_model} is the primary interest. Ideally, $\norm {\psi_{j}}_{L^{\infty} (\mathcal{D}) }$ is desired to converge fast as $j$ increases to reduce the effective dimension. In the nonasymptotic QMC convergence model for elliptic PDEs~\eqref{eq:Astar_pde}, the value $\sum_{j=1}^s b_j^2$ is expected to be small for a fixed dimension $s$. We observed that the values $b_j$ of the wavelet-type basis in the plotted range are smaller than those of the trigonometric basis. The localization properties of the wavelet-type basis make it even more favorable when the function $\psi_{j, p}$ is restricted on $\mathcal{D}$ since $\norm {\psi_{j}}_{L^{\infty} (\mathcal{D}) } \leq \norm {\psi_{j, p}}_{L^{\infty} (\mathcal{D}_p) }$.

\begin{figure}
	\centering
	\includegraphics[width=0.48\linewidth]{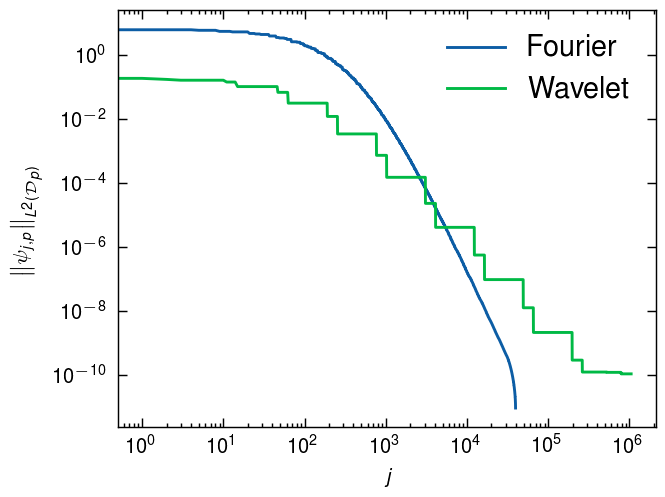}
	\hfill
	\includegraphics[width=0.48\linewidth]{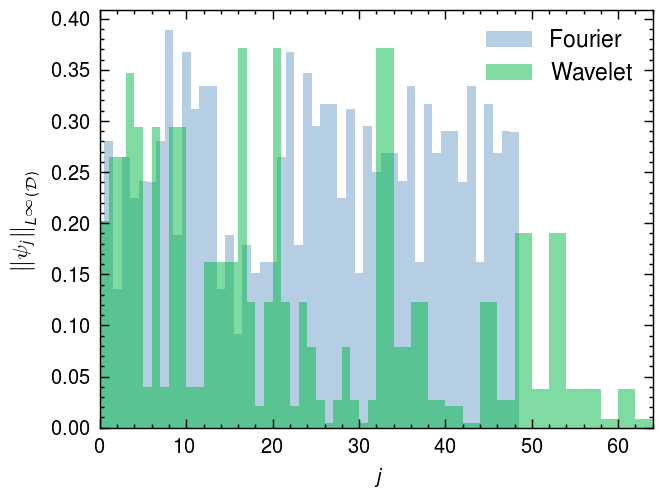}
	\caption{Example 2, a comparison of the $L^2$ norms $\norm{\psi_{j, p}}_{L^2(\mathcal{D}_p)}$ and $L^{\infty}$ norms $\norm{\psi_j}_{L^{\infty}(\mathcal{D}) }$ between the Fourier basis and wavelet-type basis. }
	\label{fig:fourierwaveletlinfty}
\end{figure}

Figure~\ref{fig:example_2_is_wavelet} plots the convergence of the RMSE for the original integrand $g$ and the integrand with IS $g_{\textrm{IS}}$, where the wavelet-type basis is used for two variance settings and two dimension settings. In all cases, both types of IS improve the integrand regularity, reducing the RMSE and improving the convergence rate. Furthermore, the effectiveness of the IS is more significant when the dimension is smaller, as IS has more limitations in higher dimensions to not increase the variance. {Moreover, it has been observed that the IS is more effective when the original integrand has a higher variance, particularly when $\sigma^2$ is larger.}

\begin{figure}[htbp]
	\centering
	\includegraphics[width = 0.48\textwidth]{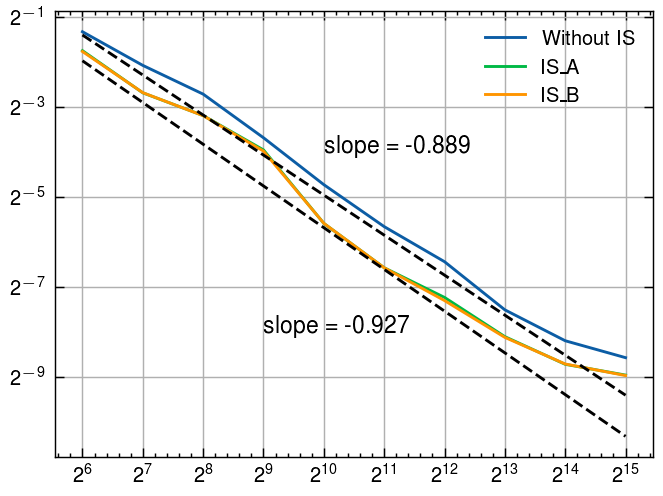}
	\hfill
	\includegraphics[width = 0.48\textwidth]{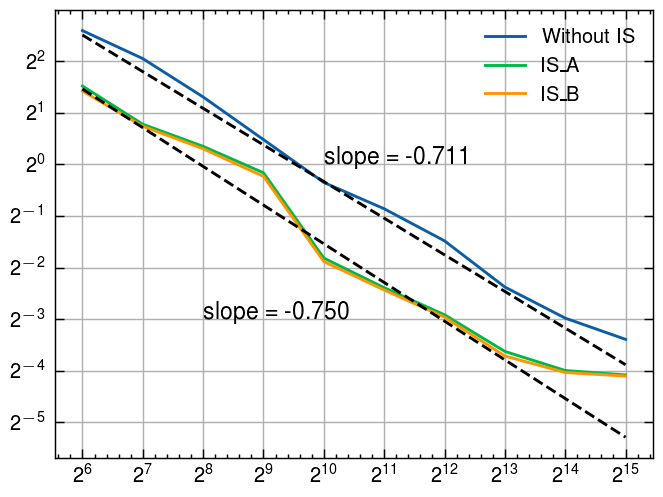}
	\\
	\includegraphics[width = 0.48\textwidth]{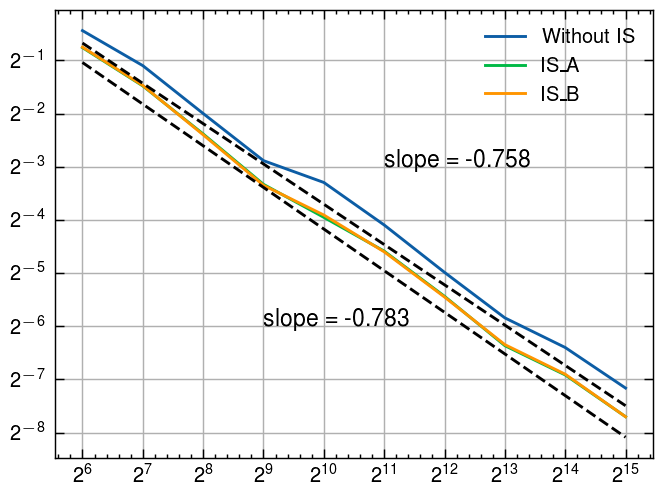}
	\hfill
	\includegraphics[width = 0.48\textwidth]{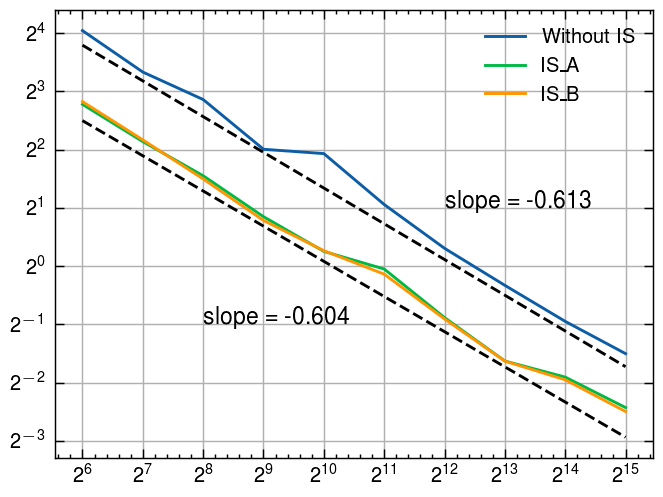}
	
	\caption{Example 2, the root mean squared error with and without two types of IS for $s=16$ (top), $s=64$ (bottom) with $\bm{\sigma} = \bm{1}$ (left) and $\bm{\sigma} = \bm{2}$ (right). The fitted convergence rate is marked in the figure. The random coefficient $a$ is expressed using the wavelet-type basis. }
	\label{fig:example_2_is_wavelet}
\end{figure}

\section{Conclusion}
\label{sec:concl}

In this work, we study the nonasymptotic QMC convergence rate model for functions with finite-dimensional inputs. Specifically, we focus on the expectation of a lognormal random variable and an elliptic PDE problem characterized by lognormal coefficients. Drawing upon the hyperbolic set $K_{n,s}$ introduced in the work of Owen~\cite{Owen06}, we subdivided the integration domain. This division allowed us to split the QMC integration error into two distinct contributions, namely: $2\int_{[0, 1]^s - K_{n, s}} \lvert g - \tilde{g} \rvert$ and $\mathbb{E} \left[ D_n^*(t_1, \dotsc, t_n) \right]V_{HK}(\tilde{g})$. This method replaces the right-hand side of the Koksma–Hlawka inequality $V_{\textrm{HK}}(g) D^*_n(\mathcal{P})$, which is infinite for the unbounded functions considered in this work, with two finite terms. 

Our primary contribution through this work has been deriving an upper bound for the integrand derivative via an optimization problem, denoted as \eqref{eq:optimization_aj}. With this in place, we applied it within the convergence model, culminating in deriving the nonasymptotic QMC convergence rate model. In our quest to verify the proposed model, we presented numerical examples for the two problems above. Moreover, the analytical procedures delineated here can find relevance in varied integration challenges, one notable instance being the option pricing problem within financial scenarios. 

In addition to the above, our work also involved applying two IS distributions, leading us to evaluate their potential to enhance integrand regularity. Notably, despite the distinct support of these distributions, their influence on the integrand appeared broadly analogous.

However, there remain areas that we have not explored in depth. For instance, we have yet to derive the constant $C$ within the hyperbolic set. Delving into its dependence on dimensionality and integrand characteristics could be a promising trajectory for subsequent studies. Moreover, future inquiries could also encompass the random field's truncation error in the random PDE model, where wavelets demonstrate a superior edge over the trigonometric basis, as indicated in~\cite{bachmayr2017sparse}. Furthermore, examining a multilevel setting with adaptivity, as hinted at in~\cite{beck2022goal}, could be beneficial.

Lastly, it is worth noting our choice to employ the Sobol' sequence instead of exploring weighted spaces or designing lattice rules (e.g., \cite{Kuo2012QuasiMonteCF}, \cite{graham2015quasi}). The latter could reduce the worst-case integration error for specific function spaces, potentially leading to a dimension-independent outcome given certain conditions. This combination of lattice rules and nonasymptotic analysis remains a compelling avenue for future exploration.

\section*{Acknowledgement}
\label{sec:ackno}

This publication is based on work supported by the Alexander von Humboldt Foundation and the King Abdullah
University of Science and Technology (KAUST) office of sponsored research (OSR) under Award
No. OSR-2019-CRG8-4033. This work utilized the resources of the Supercomputing Laboratory at King Abdullah University of Science and Technology (KAUST) in Thuwal, Saudi Arabia. The authors thank Christian Bayer, Fabio Nobile and Erik von Schwerin for fruitful discussions. We are also grateful to Arved Bartuska and Michael Samet for proofreading this manuscript. Their suggestions significantly improve the paper's readibility.
We acknowledge the use of the following open-source software
packages: \texttt{deal.II}~\cite{dealII92}.


\appendix

\section[Proof of Thm]{Proof of \ref{lemma:bound for d-dim uniform RV}}
\label{sec:proof_of_lemma_bound_for_d-dim_uniform_RV}

\begin{proof}
	Let $\mathbf{t}_n \sim U[0, 1]^s$. We have
	\begin{equation}
	\begin{split}
	\textnormal{Pr}\left( \prod_{j=1}^s t_n^j \leq C \cdot n^{-r} \right) &= \textnormal{Pr}\left( -2\log \left(\prod_{j=1}^s t_n^j \right) \geq 2r \log(n) - 2\log (C) \right)  \\
	&= \textnormal{Pr}\left( \chi^2_{(2s)} \geq 2r \log(n) - 2\log (C) \right)  \\
	&= \int_{2r \log(n) - 2\log (C)}^{\infty} \frac{z^{s-1} e^{-z/2}}{2^s \Gamma(s)}dz  \\
	&= \int_{r \log(n) - \log (C)}^{\infty} \frac{y^{s-1} e^{-y}}{\Gamma(s)}dy ,
	\label{eq:proof_bound_d-dim_uniform_RV_1}
	\end{split}
	\end{equation}
where $\chi^2_{(2s)}$ denotes a chi-squared distribution with $2s$ degrees of freedom and $\Gamma(s)$ denotes the gamma function evaluated at $s$. Now, we use the upper bound derived in~\cite{pinelis2020exact} for the incomplete gamma function:
	\begin{align}
		\Gamma(s, x) = \int_{x}^{\infty} y^{s-1} e^{-y}dy \leq G_{s}(x)
	\end{align}
	where 
	\begin{align}
		G_s(x) = \frac{(x + b_s)^s - x^s}{s b_s}e^{-x}
	\end{align}
with $b_s = \Gamma(s+1)^{\frac{1}{s-1}}$. Then, we can bound~\eqref{eq:proof_bound_d-dim_uniform_RV_1} by
	\begin{align}
			\int_{r \log(n) - \log (C)}^{\infty} \frac{y^{s-1} e^{-y}}{\Gamma(s)}dy &\leq \frac{G_s(\log(\frac{n^r}{C}))}{\Gamma(s)}\nonumber \\
			&= \frac{\left(\log\left(\frac{n^r}{C}\right) + \Gamma(s+1)^{\frac{1}{s-1}}\right)^s - \left(\log(\frac{n^r}{C})\right)^s}{\Gamma(s) s \Gamma(s+1)^{\frac{1}{s-1}}}\cdot \frac{C}{n^r}.
			\label{eq:proof_bound_d-dim_uniform_RV_2}
	\end{align}
	The last expression in~\eqref{eq:proof_bound_d-dim_uniform_RV_2} is summable, for $n = 1, \dotsc, +\infty$, yielding $\sum_{n=1}^{\infty} \textrm{Pr}(E_n) < \infty$. Thus, by the Borel–Cantelli lemma,
	\begin{align*}
		\textnormal{Pr}\left( E_n \enspace i.o. \right) &= \textnormal{Pr}\left( \bigcap_{n=1}^{\infty} \bigcup_{k = n}^{\infty} E_k \right) \nonumber \\
		&= \lim_{n \to \infty} \textnormal{Pr}\left( \bigcup_{k = n}^{\infty} E_k \right)\\
		&\leq \lim_{n \to \infty} \sum_{k=n}^{\infty} \textnormal{Pr}\left( E_k \right)\\
		&= 0.
	\end{align*}
\end{proof}

\section[Proof of Thm]{Proof of Lemma~\ref{lemma:integration_difference_integrand_sobol_low_variation_extension}}
\label{sec:proof_of_lemma_integration_difference_integrand_sobol_low_variation_extension}
\begin{proof}
We use proof by induction. Let $A_1, \dotsc, A_s < 1$ be distinct. Denote $\mathbf{t} = (t_1, \dotsc, t_s)$. Define the set $K_{\epsilon} \coloneqq \{\mathbf{t} \in [0, 1]^s: \prod_{j=1}^s t_j \geq \epsilon \}$, the polynomial $\phi(a) \coloneqq \prod_{j=1}^s (a - A_j)$. First we propose the hypothesis that
\begin{equation}
	\int_{K_{\epsilon}} \prod_{j=1}^{s} t_j^{-A_j} d\mathbf{t} = \frac{1}{\phi(1)} + \sum_{j=1}^s \frac{\epsilon^{1- A_j}}{(A_j - 1) \phi^{\prime} (A_j)}.
\end{equation}
The base case $s = 1$ is easy to verify. Now we assume the induction hypothesis holds for $s - 1$ dimension, and we will prove it for $s$ dimensions. First we rewrite the integral use change of variable as follows:
\begin{equation}
	\label{eq:b2}
	\begin{split}
		\int_{K_{\epsilon}} \prod_{j=1}^{s} t_j^{-A_j} d\mathbf{t} &= \int_{\sum_{j=1}^s x_j \geq \log \epsilon} \prod_{j=1}^{s} e^{(1-A_j) x_j} dx_1 \dotsm dx_s\\
		&= \int_{\log \epsilon}^{0} e^{x_1(1-A_1)} dx_1 \cdots \int_{\log \epsilon - \sum_{j=1}^{s-2} x_j}^{0} e^{x_{s-1}(1-A_{s-1})} dx_{s-1} \\
		& \quad \cdot \int_{\log \epsilon - \sum_{j=1}^{s-1}}^{0} e^{x_s(1-A_s)} dx_s. 
	\end{split}
\end{equation}
We have
\begin{equation}
	\label{eq:b3}
	\int_{\log \epsilon - \sum_{j=1}^{s-1}}^{0} e^{x_s(1-A_s)} dx_s = \frac{1}{1-A_s} - \frac{\epsilon^{1-A_s}}{1-A_s} e^{(A_s-1) \sum_{j=1}^{s-1} x_j}.
\end{equation}
Substitude~\eqref{eq:b3} into~\eqref{eq:b2} we have
\begin{equation}
	\int_{K_{\epsilon}} \prod_{j=1}^s t_j^{-A_j} = \int_{\sum_{j=1}^{s-1} x_j \geq \log \epsilon} \frac{1}{1 - A_s} \prod_{j=1}^{s-1} e^{x_j(1-A_j)} - \frac{\epsilon^{1-A_s}}{1-A_s} \prod_{j=1}^{s-1} e^{(A_s - A_j)x_j} dx_1 \dotsm dx_{s-1}.
\end{equation}
With the change of variable and induction hypothesis, we have
\begin{equation}
	\label{eq:b5}
	\begin{split}
	&\int_{\sum_{j=1}^{s-1} x_j \geq \log \epsilon} \frac{1}{1 - A_s} \prod_{j=1}^{s-1} e^{x_j(1-A_j)} dx_1 \dotsm dx_{s-1}\\
	 &= \frac{1}{1 - A_s} \int_{\sum_{j=1}^{s-1} x_j \geq \log \epsilon} \prod_{j=1}^{s-1} e^{x_j(1-A_j)} dx_1 \dotsm dx_{s-1}\\
	&= \frac{1}{1 - A_s} \left( \frac{1}{\prod_{j=1}^{s-1} (1 - A_j) } + \sum_{j=1}^{s-1} \frac{\epsilon^{1 - A_j}}{(A_j - 1) \prod_{k = 1, k\neq j}^{k = s-1} (A_j - A_k) } \right)
	\end{split}
\end{equation}
and
\begin{equation}
	\begin{split}
	\int_{\sum_{j=1}^{s-1} x_j \geq \log \epsilon} \prod_{j=1}^{s-1} e^{(A_s - A_j)x_j} dx_1 \dotsm dx_{s-1} &= \int_{\sum_{j=1}^{s-1} x_j \geq \log \epsilon} \prod_{j=1}^{s-1} e^{(1 - (1 - A_s + A_j))x_j} dx_1 \dotsm dx_{s-1}\\
	&= \frac{1}{\prod_{j=1}^{s-1} (A_s - A_j)} + \sum_{j=1}^{s-1} \frac{\epsilon^{A_s - A_j}}{(A_j - A_s) \prod_{k = 1, k\neq j}^{k = s-1} (A_j - A_k)  }.
	\end{split}
\end{equation}
Thus
\begin{equation}
	\label{eq:b7}
	\begin{split}
	&\int_{\sum_{j=1}^{s-1} x_j \geq \log \epsilon} - \frac{\epsilon^{1 - A_s}}{1 - A_s} \prod_{j=1}^{s-1} e^{(A_s - A_j)x_j} dx_1 \dotsm dx_{s-1} \\  &\hspace*{8em} = - \frac{\epsilon^{1-A_s}}{(1-A_s)\prod_{j=1}^{s-1} (A_s - A_j)} - \frac{1}{1-A_s} \sum_{j=1}^{s-1} \frac{\epsilon^{1 - A_j}}{(A_j - A_s) \prod_{k = 1, k\neq j}^{k = s-1} (A_j - A_k)  }.
	\end{split}
\end{equation}
By combining~\eqref{eq:b5} and~\eqref{eq:b7}, we have
\begin{equation}
	\begin{split}
		\int_{K_{\epsilon}} \prod_{j=1}^s t_j^{-A_j} &= \frac{1}{1 - A_s} \left( \frac{1}{\prod_{j=1}^{s-1} (1 - A_j) } + \sum_{j=1}^{s-1} \frac{\epsilon^{1 - A_j}}{(A_j - 1) \prod_{k = 1, k\neq j}^{k = s-1} (A_j - A_k) } \right) \\
		&\hspace*{3em}- \frac{\epsilon^{1-A_s}}{(1-A_s)\prod_{j=1}^{s-1} (A_s - A_j)} - \frac{1}{1-A_s} \sum_{j=1}^{s-1} \frac{\epsilon^{1 - A_j}}{(A_j - A_s) \prod_{k = 1, k\neq j}^{k = s-1} (A_j - A_k)  }\\
		&= \frac{1}{\prod_{j=1}^{s} (1 - A_j) } + \frac{1}{1 - A_s} \sum_{j=1}^{s-1} \frac{\epsilon^{1 - A_j}}{\prod_{k = 1, k\neq j}^{k = s-1} (A_j - A_k) } \left( \frac{1}{A_j - 1} - \frac{1}{A_j - A_s} \right)\\
		&\hspace*{3em}+ \frac{\epsilon^{1-A_s}}{(A_s - 1)\prod_{j=1}^{s-1} (A_s - A_j)}\\
		&= \frac{1}{\phi(1)} + \frac{1}{1 - A_s} \sum_{j=1}^{s-1} \frac{\epsilon^{1 - A_j}}{\prod_{k = 1, k\neq j}^{k = s-1} (A_j - A_k) }  \frac{1 - A_s}{(A_j - 1)(A_j - A_s)}\\
		&\hspace*{3em}+ \frac{\epsilon^{1-A_s}}{(A_s - 1) \phi^{\prime}(A_s)}\\
		&= \frac{1}{\phi(1)} + \sum_{j=1}^s \frac{\epsilon^{1-A_j}}{(A_j - 1) \phi^{\prime}(A_j)}. 
	\end{split}
\end{equation}
Notice that
\begin{equation}
	\int_{[0, 1]^s} \prod_{j=1}^s t_j^{-A_j} d\mathbf{t} = \frac{1}{\phi(1)}.
\end{equation}
Thus we have
\begin{equation}
	\int_{[0, 1]^s - K_{\epsilon}} \prod_{j=1}^s t_j^{-A_j} d\mathbf{t} = \sum_{j=1}^s \frac{\epsilon^{1-A_j}}{(A_j - 1) \phi^{\prime}(A_j)}. 
\end{equation}
When $A_1 = A_2 = \cdots = A_s = A < 1$, we substitude~\eqref{eq:b3} into~\eqref{eq:b2} and get
\begin{equation}
	\int_{K_{\epsilon}} \prod_{j=1}^s t_j^{-A} = \int_{\sum_{j=1}^{s-1} x_j \geq \log \epsilon} \frac{1}{1 - A} \prod_{j=1}^{s-1} e^{x_j(1-A)} - \frac{\epsilon^{1-A}}{1-A} dx_1 \dotsm dx_{s-1}.
\end{equation}
We denote $\mathcal{A}_s \coloneqq \int_{K_{\epsilon}} \prod_{j=1}^s t_j^{-A} dt_1 \cdots dt_s$ and $\lambda_s \coloneqq \int_{\sum_{j=1}^{s} x_j \geq \log \epsilon} dx_1 \dotsm dx_{s}$, we have the following:
\begin{equation}
	\begin{split}
	\mathcal{A}_s &= \frac{1}{1-A} \mathcal{A}_{s-1} - \frac{\epsilon^{1-A}}{1-A} \lambda_{s-1}\\
	&= \frac{1}{(1-A)^2} \mathcal{A}_{s-2} - \frac{\epsilon^{1-A}}{1-A} \lambda_{s-1} - \frac{\epsilon^{1-A}}{(1-A)^2} \lambda_{s-2}\\
	& \hspace*{0.5em}  \vdots \\
	&= \frac{1}{(1-A)^{s-1}} \mathcal{A}_{1} - \sum_{k=1}^{s-1} \frac{\epsilon^{1-A}}{(1-A)^{k}} \lambda_{s-k}\\
	&= \frac{1 - \epsilon^{1-A}}{(1-A)^{s}} - \sum_{k=1}^{s-1} \frac{\epsilon^{1-A}}{(1-A)^{k}} \cdot \frac{\log^{s-k} \epsilon}{(s-k)!},
	\end{split}
\end{equation}
where we derive the last line by noticing $A_1 = \frac{1 - \epsilon^{1-A}}{1-A}$ and the measure of the simplex $\lambda_s = \frac{\log^{s} \epsilon}{s!}$. Thus,
\begin{equation}
	\int_{[0, 1]^s - K_{\epsilon}} \prod_{j=1}^s t_j^{-A} d\mathbf{t} = - \sum_{k=1}^{s} \frac{\epsilon^{1-A}}{(1-A)^{k}} \cdot \frac{\log^{s-k} \epsilon}{(s-k)!}. 
\end{equation}
 This concludes the proof. 
\end{proof}

\section[Proof of Thm]{Proof of \ref{lemma:first_order_taylor_approximation_local_rate_A}}
\label{sec:proof_of_lemma_first_order_taylor_approximation_local_rate_A}
\begin{proof}
	We take the logarithm of $h(\mathbf{v})$:
	\begin{align}
	\log {h(\mathbf{v})} = \sum_{j=1}^s \sigma_j \sqrt{-2 \log ({v_j})}.
	\label{eq:inequality_local_rate_A_log}
	\end{align}
	We further apply the change of variable to simplify the notation. For $\mathbf{v}_c \in \partial K_{n,s}$, let $\mathbf{z} = -\log \mathbf{v}$, and $\mathbf{z}_c = -\log \mathbf{v}_c$. By a {first-order Taylor} approximation of~\eqref{eq:inequality_local_rate_A_log},
	\begin{align}
	\begin{split}
	\sum_{j=1}^s \sigma_j \sqrt{-2 \log ({v_j})} &=
	\sum_{j=1}^s \sigma_j \sqrt{2 z_j}\\
	 &\leq \nabla_{\mathbf{z} = \mathbf{z}_c} \left(\sum_{j=1}^s \sigma_j \sqrt{2 {z_j}} \right) \cdot (\mathbf{z} - \mathbf{z}_c) + \left(\sum_{j=1}^s \sigma_j \sqrt{2 z_c^j} \right) \\
	&= \sum_{j=1}^s \frac{\sigma_j}{\sqrt{2 z_c^j}} {z}_j + \sum_{j=1}^s \sigma_j \sqrt{\frac{z_c^j}{2}}
	\end{split}
	\label{eq:inequality_local_approximation_A}
	\end{align}
	where we use the convexity. By taking the exponential of~\eqref{eq:inequality_local_approximation_A} and comparing with the Equation~\eqref{eq:inequality_local_rate_A}, we deduce that
	\begin{align}
	{A}_j &= \frac{\sigma_j}{\sqrt{2z_c^j}} = \frac{\sigma_j}{\sqrt{-2 \log v_c^j}} = \frac{\sigma_j}{\sqrt{-2 \log (1-t_c^j)}}\\
	B(\mathbf{v}_c)&= \prod_{j=1}^{s} \exp\left( \sigma_j \sqrt{\frac{-\log (1 - t_c^j)}{2}} \right). 
	\end{align}
\end{proof}

\section[Lagrangian]{Lagrangian to Optimization problem \eqref{eq:optimization_aj_ex_1}}
\label{sec:lagrangian_to_optimization_problem_3.7}
The Lagrangian is given by
\begin{align}
	\begin{split}
		\mathcal{L} &= \log B(\mathbf{v}_c) +  \sum_{j=1}^s -A_j \log v_c^j + \lambda \left(\sum_{j=1}^s -\log v_c^j - \log \delta^{-1} \right)\\
		&= \sum_{j=1}^s 2 \sigma_j \sqrt{\frac{-\log v_c^j }{2}} + \lambda \left(\sum_{j=1}^s -\log v_c^j - \log\delta^{-1} \right).
	\end{split}
\end{align}
The optimizer $-\log {v_c^j}^*$ for Equation~\eqref{eq:optimization_aj_ex_1} is given by
\begin{align}
	-\log {v_c^j}^* = \frac{\sigma_j^2}{\sum_{k=1}^s \sigma_k^2} \log \delta^{-1}. 
\end{align}

\section[Derivation of $A^*$]{Derivation of $A^*$ in Section~\ref{subsec:qmc4pde}}
\label{sec:derivation_of_astar_pde}
From~\cite{graham2015quasi}, an upper bound for the derivative of $Q$ w.r.t. the random variable $\mathbf{y}$ is given by
\begin{align}
\left\lvert {\partial^{\mathfrak{u}}}Q({\mathbf{y}}) \right\rvert &\leq \lVert \mathcal{G} \rVert_{V^\prime} \lVert \partial^{\mathfrak{u}} u(\cdot, \mathbf{y}) \rVert_V\nonumber \\
&\leq \frac{\lvert \mathfrak{u} \rvert !}{(\log 2)^{\lvert \mathfrak{u} \rvert}} \left( \prod_{j \in \mathfrak{u}} b_j \right) \underbrace{\lVert f \rVert_{V^\prime} \lVert \mathcal{G} \rVert_{V^\prime}}_{K^*} \prod_{j=1}^s \exp(b_j \lvert {\mathrm{y}}_j \rvert).
\label{eq:partial_Q_inequality}
\end{align}
Consider $\mathbf{t} \in [0, \frac{1}{2}]^s$, the derivative of the integrand $g$ is given as follows:
\begin{align}
\label{eq:derivative_g_ex2}
\begin{split}
\abs*{ {\partial }^{{\mathfrak{u}}}g(\mathbf{t})} &= \abs*{\partial^{\mathfrak{u}} Q(\Phi^{-1}(\mathbf{t}))} = \abs{\partial^{\mathfrak{u}}Q} \cdot \prod_{j \in \mathfrak{u}} \abs*{\partial^j \Phi^{-1} (t_j)}\\
&\leq K^* \frac{\lvert \mathfrak{u} \rvert !}{(\log 2)^{\lvert \mathfrak{u} \rvert}} \left(\prod_{j \in \mathfrak{u}} b_j \right) \cdot \prod_{j=1}^s \exp(b_j \lvert \mathrm{y}_j \rvert) \prod_{j\in \mathfrak{u}} \abs*{\partial^j \Phi^{-1} ({t}_j)}\\
&\leq K^* \frac{\lvert \mathfrak{u} \rvert !}{(\log 2)^{\lvert \mathfrak{u} \rvert}} \left(\prod_{j \in \mathfrak{u}} b_j \right) \prod_{j=1}^s \exp\left( \abs{\bar{\epsilon}} \sum_{j=1}^s b_j \right) \exp\left(b_j {\sqrt{-2\log(t_j)}}\right)\\
&\hspace*{15em} \cdot \prod_{j\in \mathfrak{u}} \sqrt{2\pi} \exp\left(\frac{\bar{\epsilon}^2 }{2}\right) t_j^{-(1+\abs{\bar{\epsilon}})}, 
\end{split}
\end{align}
where we apply the upper bound of $\abs{\partial \Phi^{-1}}$ in~\eqref{eq:derivative_phi_-1_upper_bound}. The cases when $\mathbf{t} \notin [0, \frac{1}{2}]^s$ can be derived by symmetry. The upper bound~\eqref{eq:derivative_g_ex2} follows a similar behavior as~\eqref{eq:upper_bound_partial_derivative_g_ex1} of Example 1, and we skip the rest of derivations for the RQMC error bound. 

\section[Study of ${\partial^{\mathfrak{u}}}g_{\mathrm{IS}}$]{Detailed derivations of ${\partial^{\mathfrak{u}}}g_{\mathrm{IS}}$ of the first example in Section~\ref{subsec:first_integral}}
\label{sec:derivation_of_partial_g_is}
Let us define $\varphi(\mathbf{t}) = \prod_{j=1}^s {\alpha_j} \exp\left( -\frac{(\Phi^{-1}({t_j}))^2}{2} ({\alpha}_j^2 - 1) \right)$ for the simplicity of notations. We will analyze the mixed first-order derivative ${\partial^{\mathfrak{u}}}g_{\mathrm{IS}}$.  Specifically, by the Leibniz product rule, we have
\begin{align}
\begin{split}
\abs*{ {\partial^{\mathfrak{u}}}g_{\mathrm{IS}} (\mathbf{t})} &=\abs*{ \sum_{\mathfrak{z} \subseteq \mathfrak{u}} \frac{\partial}{\partial t_{\mathfrak{u}- \mathfrak{z}}} \nu (\bar{\mathbf{y}}) \cdot {\partial^{\mathfrak{z}}} \varphi(\mathbf{t}) }.
\end{split}
\label{eq:g_pde_is_1}
\end{align}
We have
\begin{align}
\frac{\partial}{\partial t_{\mathfrak{u}- \mathfrak{z}}} \nu (\bar{\mathbf{y}}) = \frac{\partial}{\partial \bar{\mathrm{y}}_{\mathfrak{u}- \mathfrak{z}}} \nu (\bar{\mathbf{y}}) \cdot \prod_{j \in \mathfrak{u}- \mathfrak{z}} \alpha_j \frac{\partial \mathrm{y}_j}{\partial t_j}, 
\label{eq:g_pde_is_2}
\end{align}
and
\begin{align}
\begin{split}
{\partial^{\mathfrak{z}}} \varphi(\mathbf{t}) &= \prod_{j=1}^s {\alpha_j} \exp\left( -\frac{(\Phi^{-1}({t_j}))^2}{2} ({\alpha}_j^2 - 1) \right) \cdot \prod_{k \in \mathfrak{z}} -\frac{\alpha_k^2 - 1}{2} \cdot \frac{\partial \left(\Phi^{-1}(t_k)\right)^2}{\partial t_k}. 
\end{split}
\label{eq:g_pde_is_3}
\end{align}
In Example 1 (i.e., when $\nu = \exp \circ \boldsymbol{\sigma}^T $), we have
\begin{align}
	\partial^{\mathfrak{u}- \mathfrak{z}} \nu(\bar{\mathbf{y}}) = \exp\left( \sum_{j=1}^s \sigma_j \bar{\mathrm{y}}_j \right) \cdot \prod_{k \in \mathfrak{u}- \mathfrak{z}} \sigma_k.
	\label{eq:g_pde_is_3_1}
\end{align}
Notice that we have the following asymptotic approximations as $t \to 0$:
\begin{align}
	\label{eq:approximations_phi_-1_asymptotic}
	\begin{split}
		\Phi^{-1}(t) &= -\sqrt{-2 \log t} + o(1),\\
		\frac{\partial \Phi^{-1}(t)}{\partial t} &= \frac{\sqrt{2\pi}}{t} \exp\left(o(1) - o(1)\sqrt{-2\log t}\right) = \mathcal{O}(t^{-1})
	\end{split}
\end{align}
By combining equations~\eqref{eq:g_pde_is_1} to \eqref{eq:approximations_phi_-1_asymptotic}, when $\mathbf{t} \in [0, \frac{1}{2}]^s$, we have
\begin{align}
\begin{split}
\abs*{ {\partial^{\mathfrak{u}}} g_{\mathrm{IS}} (\mathbf{t})} &=\abs*{ \sum_{\mathfrak{z} \subseteq \mathfrak{u}}   \frac{\partial}{\partial t_{\mathfrak{u}- \mathfrak{z}}} \nu (\bar{\mathbf{y}}) \cdot {\partial^\mathfrak{u}} \varphi(\mathbf{t}) } \\
&\leq \prod_{j=1}^s {\alpha_j} \exp\left( -\frac{(\Phi^{-1}({t_j}))^2}{2} ({\alpha}_j^2 - 1) \right) \cdot \prod_{j=1}^s \exp(\sigma_j \lvert \alpha_j \Phi^{-1}(t_j) \rvert)\\
&\cdot  \sum_{\mathfrak{z} \preceq \mathfrak{u}}  \prod_{j \in \mathfrak{u}- \mathfrak{z}} \sigma_j \alpha_j \frac{\partial \mathrm{y}_j}{\partial t_j}
\cdot \prod_{k \in \mathfrak{z}} -\frac{\alpha_k^2 - 1}{2} \cdot {\frac{\partial \left(\Phi^{-1}(t_k) \right)^2}{\partial t_k }}\\
&= \prod_{j=1}^s  {\alpha_j}  t_j^{{\alpha}_j^2 - 1} \exp\left(o(1) \sqrt{-2\log t} - o(1)\right) \cdot \exp\left(\alpha_j \sigma_j {\sqrt{-2\log (t_j)} } + o(1) \right)\\
& \cdot \sum_{\mathfrak{z} \preceq \mathfrak{u}}   \prod_{j \in \mathfrak{u} - \mathfrak{z}} \sigma_j  \alpha_j \mathcal{O}(t_j^{-1}) \prod_{k \in \mathfrak{z}}  \mathcal{O}(t_k^{-1}) \left(\sqrt{-2 \log t_k} + o(1)\right)\\
&= \prod_{j=1}^s \mathcal{O} (t_j^{-\mathbbm{1}_{j \in \mathfrak{u}} + \alpha_j^2 - 1 - \delta}) \quad  t_j \to 0, \ j = 1, \dotsc, s, \delta > 0.
\end{split}
\label{eq:isqmc_pde_A_1}
\end{align}
where we have used the asymptotic approximation of $\Phi^{-1}$ to focus on derivative at the boundary. 

\section[Study of ${\partial^{\mathfrak{u}}}g_{\mathrm{IS}}$]{Detailed derivations of ${\partial^{\mathfrak{u}}}g_{\mathrm{IS}}$ of the second example in Section~\ref{subsec:first_integral}}
\label{sec:derivation_of_partial_g_is_2}
In Example 2, $\nu = Q$, we apply the following upper bound of $\abs*{\partial^{{\mathfrak{u} - \mathfrak{z}}}Q(\bar{\mathbf{y}})}$:
\begin{align}
\abs*{\partial^{{\mathfrak{u} - \mathfrak{z}}}Q(\bar{\mathbf{y}})} \leq \frac{\lvert \mathfrak{u} - \mathfrak{z} \rvert !}{(\log 2)^{\lvert \mathfrak{u} - \mathfrak{z} \rvert}} \left( \prod_{j \in \mathfrak{u} - \mathfrak{z}} b_j \right) K^* \prod_{j=1}^s \exp(b_j \lvert \bar{\mathrm{y}}_j \rvert). 
\label{eq:g_pde_is_4}
\end{align}
Combining equations~\eqref{eq:g_pde_is_1} to \eqref{eq:g_pde_is_3} and \eqref{eq:g_pde_is_4}, when $\mathbf{t} \in [0, \frac{1}{2}]^s$, we obtain
\begin{align}
\begin{split}
\abs*{ {\partial^{\mathfrak{u}}} g_{\mathrm{IS}} (\mathbf{t})} &=\abs*{ \sum_{\mathfrak{z} \subseteq \mathfrak{u}}  {\partial^{\mathfrak{u}- \mathfrak{z}}} Q(\mathbf{t}) \cdot {\partial^\mathfrak{u}} \varphi(\mathbf{t}) } \\
&\leq K^* \prod_{j=1}^s {\alpha_j} \exp\left( -\frac{(\Phi^{-1}({t_j}))^2}{2} ({\alpha}_j^2 - 1) \right)  \cdot \prod_{j=1}^s \exp(b_j \lvert \alpha_j \Phi^{-1}(t_j) \rvert)\\
&\cdot \sum_{\mathfrak{z} \preceq \mathfrak{u}} \frac{\lvert \mathfrak{u} - \mathfrak{z} \rvert !}{(\log 2)^{\lvert \mathfrak{u} - \mathfrak{z} \rvert}} \left( \prod_{j \in \mathfrak{u} - \mathfrak{z}} b_j \right) \prod_{j \in \mathfrak{u}- \mathfrak{z}} \alpha_j \frac{\partial \mathrm{y}_j}{\partial t_j} 
\cdot \prod_{k \in \mathfrak{z}} -\frac{\alpha_k^2 - 1}{2} \cdot {\frac{\partial \left(\Phi^{-1}(t_k) \right)^2}{\partial t_k}}\\
&= K^* \prod_{j=1}^s {\alpha_j}  t_j^{{\alpha}_j^2 - 1} \exp\left(o(1) \sqrt{-2\log t} - o(1)\right)  \cdot \exp\left( \alpha_j b_j {\sqrt{-2\log (t_j)} } + o(1) \right)\\
& \cdot \sum_{\mathfrak{z} \preceq \mathfrak{u}}  \frac{\lvert \mathfrak{u} - \mathfrak{z} \rvert !}{(\log 2)^{\lvert \mathfrak{u} - \mathfrak{z} \rvert}} \prod_{j \in \mathfrak{u} - \mathfrak{z}} b_j  \alpha_j \mathcal{O} (t_j^{-1}) \prod_{k \in \mathfrak{z}} \mathcal{O}(t_k^{-1}) \left(\sqrt{-2 \log t_k} + o(1)\right)\\
&=  \mathcal{O} \left(\prod_{j=1}^s t_j^{-\mathbbm{1}_{j \in \mathfrak{u}} + \alpha_j^2 - 1 - \delta}\right) \quad  t_j \to 0, \ j = 1, \dotsc, s, \delta > 0.
\end{split}
\label{eq:isqmc_pde_A_2}
\end{align}
We observe that the upper bound~\eqref{eq:isqmc_pde_A_2} shares similar structures with~\eqref{eq:isqmc_pde_A_1}. 


\section[Study of ${\partial^{\mathfrak{u}}}g_{\mathrm{IS}}$]{Detailed derivations of ${\partial^{\mathfrak{u}}}g_{\mathrm{IS}}$ in Section~\ref{subsec:second_integral}}
\label{sec:derivation_of_partial_g_is_beta}
Next, we study the mixed first-order derivative of $g_{\textrm{IS}}$:
\begin{align}
\begin{split}
\abs*{ \frac{\partial}{\partial w_{\mathfrak{u}}}g_{\mathrm{IS}} (\mathbf{w})} &=\abs*{ \sum_{\mathfrak{z} \subseteq \mathfrak{u}}  \frac{\partial}{\partial w_{\mathfrak{u}- \mathfrak{z}}} Q \left(\Phi^{-1} (\Phi_{{\beta}}^{-1} (\mathbf{w}))\right) \cdot \frac{\partial}{\partial w_\mathfrak{z}} \varphi(\mathbf{w}) },\\
\end{split}
\end{align}
where $\varphi(\mathbf{w}) = \frac{1}{\rho_{\beta}(\Phi_{{\beta}}^{-1} (\mathbf{w}))}$. We have
\begin{align}
\frac{\partial}{\partial {w}_{\mathfrak{u}- \mathfrak{z}}} Q(\mathbf{w}) = \frac{\partial}{\partial \bar{\mathrm{y}}_{\mathfrak{u}- \mathfrak{z}}}Q(\bar{\mathbf{y}}) \cdot \prod_{j \in \mathfrak{u}- \mathfrak{z}} \frac{\partial \bar{\mathrm{y}}_j}{\partial t_j} \frac{\partial t_j}{\partial w_j}
\label{eq:b_is_1}
\end{align}
and
\begin{align}
\begin{split}
\frac{\partial}{\partial w_\mathfrak{z}} \varphi(\mathbf{w}) &= \frac{\partial}{\partial w_\mathfrak{z}} \prod_{j=1}^s \frac{1}{\rho_{\beta_j} (\Phi_{{\beta}_j}^{-1} ({w}_j))}\\
&= \prod_{k \in 1:s- \mathfrak{z}} \frac{1}{\beta_k} (2w_k)^{\frac{1}{\beta_k} - 1} \cdot \prod_{j \in \mathfrak{z}} \frac{2(1-\beta_j)}{\beta_j^2} (2w_j)^{\frac{1}{\beta_j}- 2}, 
\end{split}
\label{eq:b_is_2}
\end{align}
where we define $\bar{\mathbf{y}} = \Phi^{-1}(\mathbf{t})$ and $\mathbf{t} = \Phi^{-1}_\beta (\mathbf{w})$. We only consider the case where $0 \leq w_j < 1/2, \forall j = 1, \dotsc, s$ for simplicity, as other cases can be induced by symmetry. We have
\begin{align}
\frac{\partial t_j}{\partial w_j} &=  \frac{1}{\beta_j} (2w_j)^{\frac{1}{\beta_j} - 1}, 
\label{eq:b_is_3}
\end{align}
and 
\begin{align}
\frac{\partial \bar{\mathrm{y}}_j}{\partial t_j} &= \frac{\partial \Phi^{-1}(t_j)}{\partial t_j} = \mathcal{O}(t_j^{-1}) \quad \text{as } t_j \to 0.
\label{eq:b_is_4}
\end{align}
Combining~\eqref{eq:g_pde_is_4} and \eqref{eq:b_is_1}-\eqref{eq:b_is_4}, we obtain
\begin{align}
\begin{split}
\abs*{ \frac{\partial}{\partial w_{\mathfrak{u}}}g_{\mathrm{IS}}(\mathbf{w})} &=\abs*{ \sum_{\mathfrak{z} \subseteq \mathfrak{u}}  {\partial^{{\mathfrak{u}- \mathfrak{z}}}} Q\left(\Phi^{-1} (\Phi_{{\beta}}^{-1} (\mathbf{w})) \right) \cdot {\partial^{\mathfrak{z}}} \varphi(\mathbf{w}) } \\
&= \abs*{ \sum_{\mathfrak{z} \subseteq \mathfrak{u}}  \frac{\partial}{\partial \bar{\mathbf{y}}_{\mathfrak{u}- \mathfrak{z}}} Q(\bar{\mathbf{y}}) \cdot \prod_{j\in \mathfrak{u}- \mathfrak{z}} \frac{\partial \bar{\mathrm{y}}_j }{\partial t_j} \frac{\partial t_j}{\partial w_j} \cdot \frac{\partial}{\partial w_\mathfrak{z}} \varphi(\mathbf{w}) }\\
&\leq \abs*{ \sum_{\mathfrak{z} \subseteq \mathfrak{u}}  \frac{\lvert \mathfrak{u} - \mathfrak{z} \rvert !}{(\log 2)^{\lvert \mathfrak{u} - \mathfrak{z} \rvert}} \left( \prod_{j \in \mathfrak{u} - \mathfrak{z}} b_j \right) K^* \prod_{j=1}^s \exp\left(b_j \left\lvert \Phi^{-1}\left( (2w_j)^{\frac{1}{\beta}} \cdot \frac{1}{2} \right) \right\rvert\right)}\\
& \cdot  \prod_{j \in \mathfrak{u} - \mathfrak{z}} \frac{\partial \bar{\mathrm{y}}_j}{\partial t_j} \frac{\partial t_j}{\partial w_j}   \cdot \prod_{k \in 1:s- \mathfrak{z}} \frac{1}{\beta_k} (2w_k)^{\frac{1}{\beta_k} - 1} \cdot \prod_{j \in \mathfrak{z}} \frac{2(1-\beta_j)}{\beta_j^2} (2w_j)^{\frac{1}{\beta_j}- 2}\\
& = { \sum_{\mathfrak{z} \subseteq \mathfrak{u}}  \frac{\lvert \mathfrak{u} - \mathfrak{z} \rvert !}{(\log 2)^{\lvert \mathfrak{u} - \mathfrak{z} \rvert}} \left( \prod_{j \in \mathfrak{u} - \mathfrak{z}} b_j \right) K^* \prod_{j=1}^s \exp\left(b_j  \sqrt{-2\log \left( t_j \right)} +o(1) \right)}\\
& \cdot  \prod_{j \in \mathfrak{u} - \mathfrak{z}}  \mathcal{O}{(t_j^{-1})} \frac{1}{\beta_j} (2w_j)^{\frac{1}{\beta_j} - 1}  \cdot \prod_{k \in 1:s- \mathfrak{z}} \frac{1}{\beta_k} (2w_k)^{\frac{1}{\beta_k} - 1} \cdot \prod_{j \in \mathfrak{z}} \frac{2(1-\beta_j)}{\beta_j^2} (2w_j)^{\frac{1}{\beta_j}- 2}\\
& = { \sum_{\mathfrak{z} \subseteq \mathfrak{u}} K^*_{\mathfrak{u}, \mathfrak{z}} \prod_{j=1}^s \exp\left(b_j  \sqrt{-2\log \left( t_j \right)} +o(1) \right)}\\
& \cdot  \prod_{j \in \mathfrak{u} - \mathfrak{z}}  \mathcal{O}  (w_j^{- 1} ) \cdot \prod_{k \in 1:s- \mathfrak{z}} \frac{1}{\beta_k} (2w_k)^{\frac{1}{\beta_k} - 1} \cdot \prod_{j \in \mathfrak{z}} \frac{2(1-\beta_j)}{\beta_j^2} (2w_j)^{\frac{1}{\beta_j}- 2}\\
&\leq \sum_{\mathfrak{z} \subseteq \mathfrak{u}} K^*_{\mathfrak{u}, \mathfrak{z}} \prod_{j=1}^s \exp\left(b_j  \sqrt{-\frac{2}{\beta_j} \log \left( w_j \right)} +o(1) \right) \cdot  \prod_{j \in \mathfrak{u} } \mathcal{O} (w_j^{- 1})  \cdot \prod_{k=1}^s \mathcal{O} (w_k^{\frac{1}{\beta_k} - 1})\\
&= \mathcal{O} \left( \prod_{j=1}^s w_j^{-\mathbbm{1}_{j \in \mathfrak{u}} + \frac{1}{\beta_k} - 1 - \delta} \right) \quad w_j \to 0, j = 1,\dotsc, s, \textrm{for } \delta > 0.
\end{split}
\end{align}

\bibliographystyle{siamplain}
\bibliography{references, bibliography_QMC_theory,bibliography_QMC,bibliography_QMC_PDE,bibliography_QMC_finance}

\end{document}